\theoremstyle{plain}
\newtheorem{theorem}{Theorem}[section]
\newtheorem{lemma}[theorem]{Lemma}
\newtheorem{remark}[theorem]{Remark}
\theoremstyle{definition}
\newtheoremstyle{TheoremNum}
	{\topsep}{\topsep}              
  {\itshape}                      
  {}                              
  {\bfseries}                     
  {.}                             
  { }                             
  {\thmname{#1}\thmnote{ \bfseries #3}}
\newcommand{\F}{\mathbb F}
\newcommand{\fqn}{\mathbb{F}_{q^n}}
\newcommand{\fq}{\mathbb{F}_{q}}
\newcommand{\Z}{\mathbb Z}
\newcommand{\PG}{\mathrm{PG}}
\newcommand{\cC}{\mathcal C}
\newcommand{\Aut}{\mathrm{Aut}}
\newcommand{\PGL}{\mathrm{PGL}}
\newcommand{\ba}{\mathbf{a}}
\newcommand{\bb}{\mathbf{b}}
\newcommand{\PGamL}{\mathrm{P\Gamma L}}
\newcommand{\Tr}{ \ensuremath{ \mathrm{Tr}}}
\newcommand{\RN}[1]{%
  \textup{\uppercase\expandafter{\romannumeral#1}}%
}
 \def\zhou#1 {\fbox {\footnote {\ }}\ \footnotetext { From Yue: {\color{blue}#1}}}
 \def\chen#1 {\fbox {\footnote {\ }}\ \footnotetext { From Tang: {\color{red}#1}}}
\begin{document}
	\title[On the automorphism group of LP scattered linear sets]{On the automorphism groups of Lunardon-Polverino scattered linear sets}
	\author[W. Tang]{Wei Tang\textsuperscript{\,1}}
	\author[Y. Zhou]{Yue Zhou\textsuperscript{\,1,$\dagger$}}
	\author[F. Zullo]{Ferdinando Zullo\textsuperscript{\,2}}
	\address{\textsuperscript{1}Department of Mathematics, National University of Defense Technology, 410073 Changsha, China}	\email{yue.zhou.ovgu@gmail.com}
	\address{\textsuperscript{2}Department of Mathematics and Physics, University of Campania ``Luigi Vanvitelli'', 81012 Caserta, Italy}
	\email{ferdinando.zullo@unicampania.it}
	\address{\textsuperscript{$\dagger$}Corresponding author}
	\keywords{linear set, rank-metric code, automorphism group}
	\date{\today}
	\begin{abstract}
    Lunardon and Polverino introduced in 2001 a new family of maximum scattered linear sets in $\mathrm{PG}(1,q^n)$ to construct linear minimal R\'edei blocking sets. This family has been extended first by Lavrauw, Marino, Trombetti and Polverino in 2015 and then by Sheekey in 2016 in two different contexts (semifields and rank metric codes).  These linear sets are called \emph{Lunardon-Polverino} linear sets and this paper aims to determine their automorphism groups, to solve the equivalence issue among Lunardon-Polverino linear sets and to establish the number of inequivalent linear sets of this family. We then elaborate on this number, providing explicit bounds and determining its asymptotics.
	\end{abstract}
	\maketitle
	
\section{Introduction}

The determination of the automorphism group of combinatorial objects has been a central topic in combinatorics for many years, especially because it gives information on the \emph{symmetry} of the considered structure. However, very little is known about the automorphism groups of linear sets.

Let $q=p^r$, for some prime $p$ and a positive integer $r$.
Let $n$ be a positive integer and let $V$ be a $2$-dimensional vector space over $\fqn$ (up to coordinatize, we can suppose that $V=\F_{q^n}^2$). A point set $L$ of $\Lambda=\PG(V,\F_{q^n})=\PG(1,q^n)$ is said to be an \emph{$\F_q$-linear set} of $\Lambda$ of rank $k$ if it is defined by the non-zero vectors of a $k$-dimensional $\F_q$-vector subspace $U$ of $V$, i.e.
\[L=L_U:=\{\langle {\bf u} \rangle_{\mathbb{F}_{q^n}} : {\bf u}\in U\setminus \{{\bf 0} \}\}.\]
For any point $P=\PG(Z,\mathbb{F}_{q^n})$ of $\Lambda$, the \emph{weight} of $P$ in $L_U$ is defined as $w_{L_U}(P)=\dim_{\mathbb{F}_q}(U\cap Z)$.
For an $\fq$-linear set $L$ of rank $k$ the following holds
\begin{equation}\label{eq:size} 
|L|\leq \frac{q^{k}-1}{q-1}. 
\end{equation}
We also recall that two linear sets $L_U$ and $L_W$ in the projective line $\Lambda$ are said to be \emph{$\mathrm{P\Gamma L}$-equivalent} (or simply \emph{equivalent}) if there is an element $\varphi$ in $\mathrm{P\Gamma L}(2,q^n)$ such that $L_U^{\varphi} = L_W$.
In general, it is very challenging to determine whether or not two linear sets are equivalent; see e.g.\ \cite{csajbok_classes_2018}.

An important family of linear sets are the \emph{scattered}  ones, that is those linear sets satisfying the equality in \eqref{eq:size} (or, equivalently, the linear sets whose points have weight one); see e.g.\ \cite{Polverino}.
Scattered linear sets were originally introduced by Blokhuis and Lavrauw in \cite{BL2000} in a more general setting.  Since the seminal paper by Blokhuis and Lavrauw, linear sets attracted a lot of attention especially because of their connections with blocking sets, semifields, rank metric codes, etc (see e.g. \cite{LVdV2015,Polverino,John}).
In \cite{BL2000}, it has also been proved that a scattered linear set in $\Lambda$ can have rank at most $n$ and a scattered linear set of rank $n$ is said to be \emph{maximum scattered}.
The first example of maximum scattered linear set is the linear set \emph{of pseudoregulus type}, that is 
\[ \{ \langle (x,x^{q^s}) \rangle_{\fqn} \colon x \in \fqn^* \}, \]
with $\gcd(s,n)=1$.
In particular,
\[ \left\{ \langle (x,x^{q}) \rangle_{\fqn} \colon x \in \fqn^* \right\}=\left\{ \langle (x,x^{q^s}) \rangle_{\fqn} \colon x \in \fqn^* \right\}, \]
for any $s$ such that $\gcd(s,n)=1$.
This family has been studied in details, indeed its geometric characterization has been provided by Csajb\'ok and Zanella in \cite{CsZpseudo}. 

The next example of maximum scattered linear set in $\Lambda$ which has been given by Lunardon and Polverino in \cite{LP} and generalized in \cite{LMPT,John} is
\[ L=\{ \langle (x,x^{q^s}+\delta x^{q^{n-s}}) \rangle_{\fqn} \colon x \in \fqn^* \}, \]
with $n\geq 4$, $\gcd(s,n)=1$, $q\neq 2$, and nonzero $\delta$ such that $N_{q^n/q}(\delta)=\delta^{\frac{q^n-1}{q-1}}\ne 1$. This example is known as \emph{Lunardon-Polverino} linear set. If we set $\delta=0$,  then the Lunardon-Polverino linear set reduces to be of pseudoregulus type.
In \cite[Theorem 3.4]{Zanella}, Zanella proved that the condition $N_{q^n/q}(\delta)\ne 1$ is necessary for $L$ to be scattered (see also \cite{BMZZ}).
Also, a more geometric description for such linear sets has been given in \cite{ZanellaZ} and an upper bound on the number of inequivalent Lunardon-Polverino linear sets has been provided in \cite[Proposition 2.3]{LMTZscatt}, but the exact number is unknown. 

Let $L_U$ be an $\fq$-linear set of rank $n$ in $\Lambda$. It is well-known that ${\rm P\Gamma L}(2,q^n)$ is $3$-transitive on $\Lambda$, and so we may assume that $L_U$ does not contain the point $\langle(0,1)\rangle_{\fqn}$, so that $L_U$ can be written as
\[ L_f=\{ \langle (x,f(x)) \rangle_{\fqn} \colon x \in \fqn^* \}, \]
where $f$ is a \emph{$q$-polynomial} over $\fqn$, that is $f=\sum_{i=0}^{n-1}a_iX^{q^i}\in \fqn[X]$.
Sheekey in \cite{John} called $f$ \emph{scattered} if $L_f$ turns out to be a scattered $\fq$-linear set.
When choosing $f=X^{q^s}$ then $L_f$ turns out to be the linear set of pseudoregulus type, whereas if $f=X^{q^s}+\delta X^{q^{n-s}}$ then $L_f$ is the Lunardon-Polverino linear set.
These two families of polynomials play an important role in the classifications of scattered polynomials which are \emph{exceptional}, that is, roughly speaking, polynomials which are scattered for infinitely many $n$. Indeed, the known classification results suggest that these two families are the only examples of exceptional scattered polynomials; see \cite{BZ,BM,BZZ,FM}.

In this paper we study Lunardon-Polverino linear sets. 
We first completely solve the equivalence issue among two Lunardon-Polverino linear sets and we also determine the automorphism group of a Lunardon-Polverino linear set. This has been done by exploiting the problem of determining the relations between the coefficients of two $q$-polynomials $f$ and $g$ for which the value sets of $f(x)/x$ and $g(x)/x$ coincide.
As a byproduct and making use of careful computations, we completely determine $\Lambda(n,q)$ the number of inequivalent Lunardon-Polverino linear sets.
Finally we compute explicit upper and lower bounds on $\Lambda(n,q)$ and we determine its asymptotic behavior as 
$r \to +\infty$,  by making use of a result by Gronwall (1913) involving the Euler-Mascheroni constant.

\section{Auxiliary results}

We need the following well known result on the greatest common divisors. A proof can be found in \cite{Payne}.
\begin{lemma}\label{le:gcd}
	Let $v(b):= \max \{ i: 2^i \mid b\}$. For  a prime $p$,
	\begin{equation}\label{eq:gcd}
		\gcd(p^i+1,p^j-1)=
		\begin{cases}
			p^{\gcd(i,j)}+1, &v(i)<v(j); \\ 
			\frac{(-1)^{p+1}+3}{2},&\text{otherwise}.
		\end{cases}
	\end{equation}
\end{lemma}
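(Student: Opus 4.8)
The plan is to reduce the statement to the classical identity $\gcd(p^a-1,p^b-1)=p^{\gcd(a,b)}-1$. Set $g:=\gcd(p^i+1,p^j-1)$ and $d:=\gcd(i,j)$. The first observation is that $p^i+1=\frac{p^{2i}-1}{p^i-1}$ divides $p^{2i}-1$, whence
\[
g \ \big|\ \gcd(p^{2i}-1,\,p^j-1)=p^{\gcd(2i,j)}-1.
\]
A second, equally elementary, remark is that if a prime $\ell$ divides both $g$ and $p^a-1$ for some $a\mid i$ (so that $p^a-1\mid p^i-1$), then $\ell\mid (p^i+1)-(p^i-1)=2$; hence $\gcd(g,\,p^a-1)\mid 2$ for every $a\mid i$. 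This is the mechanism that will produce the constant $\frac{(-1)^{p+1}+3}{2}$ --- equal to $1$ if $p=2$ and to $2$ otherwise --- in the ``otherwise'' branch.

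The next step is a $2$-adic bookkeeping, writing $v_\ell$ for the $\ell$-adic valuation (so $v=v_2$ in the paper's notation). Comparing valuations prime by prime one checks
\[
\gcd(2i,j)=\begin{cases}2d,& v(i)<v(j),\\[2pt] d,& v(i)\ge v(j),\end{cases}
\]
since at odd primes the factor $2$ is invisible, while at the prime $2$ one has $v_2(\gcd(2i,j))=\min\{v(i)+1,v(j)\}$, which equals $v(i)+1=v_2(2d)$ when $v(i)<v(j)$ and equals $v(j)=v_2(d)$ when $v(i)\ge v(j)$.

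Combining these: if $v(i)\ge v(j)$ then $g\mid p^{d}-1\mid p^i-1$, so $g\mid\gcd(p^i+1,p^i-1)\mid 2$, and since $p^i+1$ is odd when $p=2$ while $p^i+1$ and $p^j-1$ are both even when $p$ is odd, this gives $g=1$, resp.\ $g=2$, as claimed. If instead $v(i)<v(j)$, write $i=di'$ and $j=dj'$ with $\gcd(i',j')=1$; the same valuation count forces $i'$ odd and $j'$ even. Then $p^d+1\mid p^{di'}+1=p^i+1$ (because $i'$ is odd) and $p^{2d}-1\mid p^{dj'}-1=p^j-1$ (because $j'$ is even), so $p^d+1\mid g$. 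Since also $g\mid p^{2d}-1=(p^d-1)(p^d+1)$, we may write $g=(p^d+1)m$ with $m\mid p^d-1$; but $\gcd(g,p^d-1)\mid 2$ by the first step, so $m\mid 2$.

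It remains to exclude $m=2$, and this last parity check is the only point requiring real care. For $p=2$ every quantity in sight is odd, so $m=1$ automatically. For $p$ odd, the cofactor $(p^i+1)/(p^d+1)=\sum_{k=0}^{i'-1}(-1)^k p^{d(i'-1-k)}$ is a sum of $i'$ terms, each odd, hence is odd because $i'$ is odd; therefore $v_2(g)\le v_2(p^i+1)=v_2(p^d+1)$, forcing $v_2(m)=0$ and $m=1$. Thus $g=p^d+1=p^{\gcd(i,j)}+1$, which finishes this branch. Everything outside that final step is forced mechanically by $\gcd(p^a-1,p^b-1)=p^{\gcd(a,b)}-1$ together with the valuation count, so the expected obstacle is merely keeping the stray factor of $2$ under control in the scattered-looking case $v(i)<v(j)$.
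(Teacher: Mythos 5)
Your proof is correct and complete; note that the paper itself gives no argument for this lemma, simply citing Payne's notes, so your write-up is a self-contained substitute rather than a variant of the paper's proof. The route you take --- reducing to $\gcd(p^a-1,p^b-1)=p^{\gcd(a,b)}-1$ via $p^i+1\mid p^{2i}-1$, computing $\gcd(2i,j)\in\{d,2d\}$ by a $2$-adic valuation count, and then controlling the stray factor of $2$ --- is the standard elementary derivation, and every step checks out: the case $v(i)\ge v(j)$ collapses to $g\mid\gcd(p^i+1,p^i-1)\mid 2$ with the parity of $p$ deciding between $1$ and $2$, and in the case $v(i)<v(j)$ the divisibilities $p^d+1\mid p^i+1$ (since $i'$ is odd) and $p^{2d}-1\mid p^j-1$ (since $j'$ is even) combine with $g\mid p^{2d}-1$ and $\gcd(g,p^d-1)\mid 2$ to give $g=(p^d+1)m$ with $m\in\{1,2\}$, after which your observation that $(p^i+1)/(p^d+1)$ is odd for odd $p$ correctly rules out $m=2$. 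One purely cosmetic remark: in your second observation the statement ``every prime $\ell$ dividing $\gcd(g,p^a-1)$ divides $2$'' only yields that this gcd is a power of $2$; the cleaner (and what you in effect use) is the direct chain $\gcd(g,p^a-1)\mid\gcd(p^i+1,p^i-1)\mid(p^i+1)-(p^i-1)=2$, which gives exactly the divisibility by $2$ you need.
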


Let $s$ be a positive integer such that $\gcd(s,n)=1$ and let $\sigma \colon x \in \fqn \mapsto x^{q^s} \in \fqn$.
A $\sigma$-\emph{polynomial} (or a \emph{linearized polynomial}) over $\F_{q^n}$ is a polynomial of the following form
\[f=\sum_{i=0}^{\ell} a_i X^{\sigma^i},\]
where $a_i\in \F_{q^n}$ and $\ell$ is a positive integer.
We will also call $f$ a $q^s$-\emph{polynomial}.
If $a_\ell \neq 0$ we say that $\ell$ is the $\sigma$-\emph{degree} of $f$.
We will denote by $\mathcal{L}_{n,\sigma}$ the set of all $\sigma$-polynomials over $\F_{q^n}$ with $\sigma$-degree less than $n$ (when $s=1$, we will denote it by $\mathcal{L}_{n,q}$). The set $\mathcal{L}_{n,\sigma}$ equipped with the classical sum of polynomials, the composition modulo $X^{\sigma^n}-X$ and the scalar multiplication by an element in $\fq$, is an $\fq$-algebra isomorphic to the algebra of $\fq$-linear endomorphism of $\fqn$.
This isomorphism allows us to identify a $\sigma$-polynomial with the $\fq$-linear map it defines.
We also want to underline that different choices of $\sigma$ give a different  representation of an element, but leave unchanged the algebra $\mathcal{L}_{n,\sigma}$. Hence, $\mathcal{L}_{n,\sigma}$ is the same for any choice of $\sigma$, and one can just consider $\mathcal{L}_{n,q}$.
For more details on linearized polynomials we refer to \cite[Chapter 3, Section 4]{lidl_finite_1997}.

Consider the non-degenerate symmetric bilinear form of $\F_{q^n}$ over $\F_q$ defined  by
\begin{equation*}
\langle x,y\rangle= \mathrm{Tr}_{q^n/q}(xy),
\end{equation*}
for every $x,y \in \F_{q^n}$  where $\mathrm{Tr}_{q^n/q}(x)=x+\ldots+x^{q^{n-1}}$.
The \emph{adjoint} $\hat{f}$ of the $\sigma$-polynomial $\displaystyle f=\sum_{i=0}^{n-1} a_iX^{\sigma^i} \in \mathcal{L}_{n,\sigma}$ with respect to the bilinear form $\langle\cdot,\cdot\rangle$, i.e. the unique function over $\fqn$ satisfying
\[ \mathrm{Tr}_{q^n/q}(yf(z))=\mathrm{Tr}_{q^n/q}(z\hat{f}(y)) \]
for every $y,z \in \F_{q^n}$, is given by
\begin{equation}\label{eq:adj} \hat{f}=\sum_{i=0}^{n-1} a_i^{\sigma^{n-i}}X^{\sigma^{n-i}}.
\end{equation}

In \cite[Lemma 2.6]{BGMP2015} it has been proved that a $\sigma$-polynomial $f$ and its adjoint always define the same linear set, and following the proof of \cite[Lemma 2.6]{BGMP2015} one also obtain that the weight distribution of $L_f$ coincides with the weight distribution of $L_{\hat{f}}$; see also \cite[Theorem 1]{McGS}. 
More precisely, the following holds (which we prove for sake of completeness in a different way).

\begin{lemma}\label{le:LS_adjoint}
	Let $f$ be a $\sigma$-polynomial in $\mathcal{L}_{n,\sigma}$. Then 
	\begin{equation}\label{eq:LS_adjoint}
		\#\left\{x\in\F_{q^n}^*  : \frac{f(x)}{x}=b \right\} = \#\left\{y\in\F_{q^n}^*  : \frac{\hat{f}(y)}{y}=b \right\}.
	\end{equation}
	In particular, $L_f=L_{\hat{f}}$ and $w_{L_f}(P)=w_{L_{\hat{f}}}(P)$ for every point $P \in \mathrm{PG}(1,q^n)$. 
\end{lemma}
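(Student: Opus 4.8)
The plan is to reduce the statement to a standard fact about adjoints of $\F_q$-linear endomorphisms: a linear map on $\fqn$ and its adjoint, with respect to a non-degenerate bilinear form, have equal rank, hence equal kernel dimension. First I would fix $b\in\fqn$ and consider the $\F_q$-linear endomorphism $g_b:=f-b\cdot\mathrm{id}$ of $\fqn$, where $\mathrm{id}$ denotes $x\mapsto x$. Counting the preimage of $0$ by $g_b$ gives
\[
\#\left\{x\in\fqn^*:\frac{f(x)}{x}=b\right\}=\lvert\ker g_b\rvert-1=q^{\dim_{\F_q}\ker g_b}-1,
\]
and likewise with $f$ replaced by $\hat f$. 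Thus \eqref{eq:LS_adjoint} will follow once I show $\dim_{\F_q}\ker g_b=\dim_{\F_q}\ker\widehat{g_b}$ together with the identification $\widehat{g_b}=\hat f-b\cdot\mathrm{id}$.

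Next I would compute the adjoint of $g_b$ with respect to $\langle x,y\rangle=\mathrm{Tr}_{q^n/q}(xy)$. Since $x\mapsto bx$ is self-adjoint (by commutativity of multiplication in $\fqn$) and the assignment $T\mapsto\hat T$ is additive, formula \eqref{eq:adj} immediately yields $\widehat{g_b}=\hat f-b\cdot\mathrm{id}$. Then, for any $\F_q$-linear $T\colon\fqn\to\fqn$ one has $\ker\hat T=(\operatorname{im}T)^{\perp}$, so by non-degeneracy of the form $\dim\ker\hat T=n-\dim\operatorname{im}T=\dim\ker T$. Applying this with $T=g_b$ gives $\dim\ker g_b=\dim\ker\widehat{g_b}=\dim\ker(\hat f-b\cdot\mathrm{id})$, which is precisely \eqref{eq:LS_adjoint}.

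For the ``in particular'' part: a point $\langle(1,b)\rangle_{\fqn}$ lies in $L_f$ exactly when $\dim\ker g_b\ge 1$, and its weight in $L_f$ equals $\dim_{\F_q}\bigl(\{(x,f(x)):x\in\fqn\}\cap\langle(1,b)\rangle_{\fqn}\bigr)=\dim\ker g_b$; the point $\langle(0,1)\rangle_{\fqn}$ has weight $0$ in both $L_f$ and $L_{\hat f}$, since a $\sigma$-polynomial vanishes at $0$ and so no nonzero $x$ can make $(x,f(x))$ proportional to $(0,1)$. Hence \eqref{eq:LS_adjoint} propagates to $L_f=L_{\hat f}$ and to equality of weights at every point of $\mathrm{PG}(1,q^n)$.

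I do not expect a serious obstacle here; the argument is essentially bookkeeping. The two points requiring care are (i) identifying $\widehat{g_b}$ via additivity of the adjoint and self-adjointness of scalar multiplication, rather than manipulating \eqref{eq:adj} for $g_b$ directly, and (ii) handling the point $\langle(0,1)\rangle_{\fqn}$ separately when passing from the count to the weight statement. As an alternative ``different way'', one could instead run an additive-character computation: writing $\#\{x:f(x)=bx\}=q^{-n}\sum_{x,\lambda}\chi\bigl(\lambda(f(x)-bx)\bigr)$ for a nontrivial character $\chi=\psi\circ\mathrm{Tr}_{q^n/q}$, the defining identity of $\hat f$ converts $\chi(\lambda f(x))$ into $\chi(x\hat f(\lambda))$, and interchanging the roles of the summation variables yields $\#\{\lambda:\hat f(\lambda)=b\lambda\}$; but the rank argument above is cleaner and I would use that.
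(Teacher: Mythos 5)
Your argument is correct, but it takes a genuinely different route from the paper. You fix $b$, pass to the $\F_q$-linear map $g_b=f-b\cdot\mathrm{id}$, identify $\widehat{g_b}=\hat f-b\cdot\mathrm{id}$ (using additivity of the adjoint and self-adjointness of multiplication by $b$ under the trace form), and then invoke the standard fact $\ker \hat T=(\operatorname{im}T)^{\perp}$ plus rank--nullity to get $\dim_{\F_q}\ker g_b=\dim_{\F_q}\ker\widehat{g_b}$; the point counts and the weight statement (including the separate treatment of $\langle(0,1)\rangle_{\fqn}$) then follow exactly as you say. The paper instead runs the character computation you only sketch as an alternative: it writes the double sum $\sum_{y}\sum_{x}\chi\bigl((f(x)-bx)y\bigr)=\sum_{x}\sum_{y}\chi\bigl((\hat f(y)-by)x\bigr)$, which is immediate from the defining identity of the adjoint, and evaluates both sides by orthogonality of additive characters to get $q^{n}$ times each solution count. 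Your approach is more elementary in the sense that it needs no characters and makes the equality of kernel dimensions (hence of weights) completely transparent; it is essentially the classical adjoint-rank argument underlying the earlier proofs the paper cites. The paper's character proof is shorter on the page, avoids setting up the adjoint calculus for $g_b$, and was chosen deliberately as a ``different way'' from the existing literature; both are complete proofs of the lemma.
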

\begin{proof}
	Let $\chi(x)=\zeta_p^{\Tr_{q^n/p}(x)  }$, where $p$ is the characteristic of $\F_{q^n}$ and $\zeta_p$ is a $p$-th root of unity in $\mathbb{C}$. Define $\chi_a(x)=\chi(ax)$ for $a\in \F_{q^n}$. Then the character group of the additive group of $\F_{q^n}$ is $\{ \chi_a: a\in \F_{q^n} \}$. For characters of groups, we refer to \cite[Chapter 8]{JLL}. 
	
	
	By the definition of adjoints, for any $b\in \F_{q^n}$,
	\begin{equation}\label{eq:LS_adjoint_character}
		\sum_{y\in \F_{q^n}}\sum_{x\in \F_{q^n}} \chi \left( (f(x)-bx)y\right)=\sum_{x\in \F_{q^n}}\sum_{y\in \F_{q^n}} \chi \left( (\hat{f}(y)-by)x\right).
	\end{equation}
	By the orthogonality of characters, the left-hand-side of \eqref{eq:LS_adjoint_character} equals
	\[
		\#\{x\in\F_{q^n}  : f(x)=bx \} \cdot q^n=\left(\#\left\{x\in\F_{q^n}^*  : \frac{f(x)}{x}=b \right\}+1\right) \cdot q^n.
	\]
	The same result holds for the right-hand-side of \eqref{eq:LS_adjoint_character}. Therefore \eqref{eq:LS_adjoint} is proved and the last part of the assertion is a direct consequence.
\end{proof}

Given two arbitrary linearized polynomials $f$ and $g$, it is usually quite difficult to determine whether $L_f=L_g$. However, we can obtain some conditions involving the coefficients of both $f$ and $g$. Indeed, in \cite[Lemma 3.4]{csajbok_classes_2018} the following condition arises
\[
\sum_{x\in\F_{q^n}} \left(\frac{f(x)}{x}\right)^e=\sum_{x\in\F_{q^n}} \left(\frac{g(x)}{x}\right)^e,
\]
for any positive integer $e$.
For some special values of $e$, Csajb\'ok, Marino and Polverino \cite{csajbok_classes_2018} have provided some more explicit conditions.

\begin{lemma}\label{le:jcta2018}\cite[Lemma 3.6]{csajbok_classes_2018}
	Let $f=\sum_{i=0}^{n-1} \alpha_i X^{q^i}$ and $g=\sum_{i=0}^{n-1} \beta_i X^{q^i}$ be two $q$-polynomials over $\F_{q^n}$ such that $L_f=L_g$. Then 
	\begin{equation}\label{eq:jcta_1}
		\alpha_0=\beta_0
	\end{equation}
	and 
	\begin{equation}\label{eq:jcta_2}
		 \alpha_k \alpha_{n-k} ^{q^k} = \beta_k \beta_{n-k}^{q^k} 
	\end{equation}
	for $k\in\{1,2,\cdots, n-1\}$, and 
	\[ \alpha_1 \alpha_{k-1}^q\alpha_{n-k}^{q^k}  + \alpha_k \alpha_{n-1}^{q} \alpha_{n-k+1}^{q^k}=   \beta_1 \beta_{k-1}^q\beta_{n-k}^{q^k}  + \beta_k \beta_{n-1}^{q} \beta_{n-k+1}^{q^k},\]
	for $k\in \{2,3,\cdots, n-1\}$.
\end{lemma}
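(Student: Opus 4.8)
The plan is to derive the three relations from power-sum identities. Since $L_f=L_g$, for every positive integer $e$ we have, as recalled above from \cite[Lemma~3.4]{csajbok_classes_2018},
$$\sum_{x\in\F_{q^n}^*}\left(\frac{f(x)}{x}\right)^{e}=\sum_{x\in\F_{q^n}^*}\left(\frac{g(x)}{x}\right)^{e},$$
so it is enough to evaluate the left-hand side symbolically in the coefficients $\alpha_i$ for a few well-chosen exponents $e$ and compare it with the same expression in the $\beta_i$.

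For the evaluation I would use the Frobenius identity $f(x)^{q^j}=\sum_{i=0}^{n-1}\alpha_i^{q^j}x^{q^{i+j}}$ to write, for $e=q^{k_1}+\cdots+q^{k_t}$,
$$\left(\frac{f(x)}{x}\right)^{e}=\prod_{s=1}^{t}\frac{f(x)^{q^{k_s}}}{x^{q^{k_s}}}=\sum_{i_1,\dots,i_t}\Bigl(\prod_{s=1}^{t}\alpha_{i_s}^{q^{k_s}}\Bigr)x^{\sum_{s}q^{i_s+k_s}-\sum_{s}q^{k_s}},$$
so that no multinomial coefficients intervene. Since $\sum_{x\in\F_{q^n}^*}x^{m}$ equals $-1$ when $(q^n-1)\mid m$ and $0$ otherwise, the power sum reduces to $-1$ times the sum of the monomials $\prod_{s}\alpha_{i_s}^{q^{k_s}}$ over the tuples $(i_1,\dots,i_t)$ with $\sum_{s}q^{(i_s+k_s)\bmod n}\equiv\sum_{s}q^{k_s}\pmod{q^n-1}$. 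Taking $e=1$ gives at once $\alpha_0=\beta_0$. Taking $e=q^{k}+1$ for $k\in\{1,\dots,n-1\}$, the only contributing tuples are those with $\{i_1,(i_2+k)\bmod n\}=\{0,k\}$, which yields $\alpha_k\alpha_{n-k}^{q^k}+\alpha_0^{1+q^k}=\beta_k\beta_{n-k}^{q^k}+\beta_0^{1+q^k}$, hence \eqref{eq:jcta_2} after substituting $\alpha_0=\beta_0$. Taking $e=q^{k}+q+1$ for $k\in\{2,\dots,n-1\}$ (so that $0,1,k$ are pairwise distinct modulo $n$), the contributing tuples are the six orderings of $(0,1,k)$; cancelling the four monomials that involve $\alpha_0$ by means of $\alpha_0=\beta_0$ together with \eqref{eq:jcta_2} (at suitable indices and raised to suitable $q$-th powers) leaves exactly the claimed cubic identity.

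The part that needs care, and which I expect to be the main obstacle, is the justification that modulo $q^n-1$ only these ``non-degenerate'' tuples contribute: one must check that $\sum_{s}q^{(i_s+k_s)\bmod n}$ does not wrap around, so that the congruence with $\sum_{s}q^{k_s}$ becomes an equality of integers and therefore a coincidence of base-$q$ digit patterns, and that the degenerate tuples (in which two or three of the $i_s+k_s$ agree modulo $n$) are excluded because they would produce a base-$q$ digit $\ge 2$. For $e\in\{q^k+1,\ q^k+q+1\}$ the sums involved have at most three terms and are at most $q^{n-1}+q^{n-2}+q^{n-3}$, which is less than $q^n-1$ as soon as $q\ge 3$, so the comparison is transparent; the characteristic-two case, in which carrying genuinely occurs, calls for a short separate analysis. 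This bookkeeping is also the reason one stops at degree three: for larger $e$ the proliferation of competing digit patterns, together with carries, makes the resulting identities too unwieldy to exploit.
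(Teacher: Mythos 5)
The paper does not actually prove this lemma: it is quoted verbatim from \cite[Lemma 3.6]{csajbok_classes_2018}, so there is no internal proof to compare against. Your reconstruction follows exactly the route that the cited source (and the power-sum identity recalled from \cite[Lemma 3.4]{csajbok_classes_2018} just before the lemma) suggests: expand $(f(x)/x)^e$ for $e=1$, $e=q^k+1$, $e=q^k+q+1$ using the Frobenius, reduce via $\sum_{x\in\F_{q^n}^*}x^m=-1$ iff $(q^n-1)\mid m$, and match base-$q$ digit patterns; your identification of the surviving tuples and the cancellation of the four $\alpha_0$-monomials via \eqref{eq:jcta_1} and \eqref{eq:jcta_2} (at indices $1$, $k-1$, $k$, suitably raised to $q$-th powers) is correct.

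Two small points of care. First, your bound on the exponent sums is off: the three reduced exponents $(i_s+k_s)\bmod n$ can all equal $n-1$, so the relevant sum is bounded by $3q^{n-1}$, not by $q^{n-1}+q^{n-2}+q^{n-3}$; hence the ``no wrap-around'' argument is automatic only for $q\ge 4$ in the cubic case, and $q=3$ needs the same short check as $q=2$ (both the wrap-around tuples and the degenerate tuples producing a digit $3=q$, which carries). Second, the lemma is stated for all $q$, and you defer the $q=2$ analysis without carrying it out; it does go through (for $e=q^k+1$ the congruence still forces integer equality and the digit argument survives, and for $e=q^k+q+1$ the potential wrap-around tuples would force $k=1$, which is excluded), but as written this is an acknowledged, if minor, incompleteness rather than a finished proof for small $q$.
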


For some linearized polynomials with two terms, we can obtain a further necessary condition.

\begin{lemma}\label{le:two_terms}
	Let $s$ and $n$ be two relatively prime positive integers. Let $\sigma\colon x \in \fqn\mapsto x^{q^s}\in \fqn$. For any nonzero elements $\theta, \delta,d\in \F_{q^n}$, let  $f=X^{\sigma}+\theta X^{\sigma^{n-1}}$ and $g=d(X^{\sigma}+\delta X^{\sigma^{n-1}})$ be two $q$-polynomials over $\F_{q^n}$ such that $L_f=L_g$. Then
	\begin{equation}\label{eq:two_terms_odd}
		1+N_{q^n/q}(\theta)=N_{q^n/q}(d)(1+N_{q^n/q}(\delta)),
	\end{equation}
	when $n$ is odd, and
	\begin{equation}\label{eq:two_terms_even}
	1+N_{q^n/q}(\theta)+N_{q^n/q^2}(\theta)+N_{q^n/q^2}(\theta)^\sigma=N_{q^n/q}(d)(	1+N_{q^n/q}(\delta)+N_{q^n/q^2}(\delta)+N_{q^n/q^2}(\delta)^\sigma),
	\end{equation}
	when $n$ is even.
\end{lemma}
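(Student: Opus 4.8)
The plan is to derive the two identities from a single instance of the power-sum condition recalled above, applied with the exponent $e:=(q^n-1)/(q-1)=1+q+\cdots+q^{n-1}$. For this particular $e$ the equality $\sum_{x\in\fqn^*}\big(f(x)/x\big)^{e}=\sum_{x\in\fqn^*}\big(g(x)/x\big)^{e}$ is transparent: for $y\in\fqn^*$ one has $y^{e}=N_{q^n/q}(y)$, and, grouping the $x\in\fqn^*$ according to the value $b=f(x)/x$ (whose fibre has size $q^{\,w_{L_f}(\langle(1,b)\rangle_{\fqn})}-1$), and using that in characteristic $p$ the integer $q^{w}-1$ is $\equiv -1\pmod p$ for every $w\ge 1$, one gets
\[
\sum_{x\in\fqn^*}\Big(\frac{f(x)}{x}\Big)^{e}=\sum_{b\in\fqn}\big(q^{\,w_{L_f}(\langle(1,b)\rangle_{\fqn})}-1\big)\,N_{q^n/q}(b)=-\!\!\sum_{\langle(1,b)\rangle_{\fqn}\in L_f}\!\!N_{q^n/q}(b),
\]
which depends only on the point set $L_f$; hence $L_f=L_g$ forces the two power sums to coincide.

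Next I would compute this power sum explicitly for our two binomials. Since $\sigma^{n-1}=\sigma^{-1}$ we may write $f(x)/x=x^{q^s-1}+\theta x^{q^{n-s}-1}$ and $g(x)/x=d\big(x^{q^s-1}+\delta x^{q^{n-s}-1}\big)$. Expanding the $e$-th powers by the binomial theorem and using $\sum_{x\in\fqn^*}x^{m}=-1$ when $(q^n-1)\mid m$ and $0$ otherwise, the equality of power sums collapses to
\[
\sum_{j\in S}\binom{e}{j}\theta^{j}=N_{q^n/q}(d)\sum_{j\in S}\binom{e}{j}\delta^{j},\qquad
S:=\{\,0\le j\le e:\ (e-j)(q^s-1)+j(q^{n-s}-1)\equiv 0\pmod{q^n-1}\,\},
\]
so the whole matter is reduced to deciding which summands actually survive.

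A summand contributes only if $\binom{e}{j}\not\equiv 0\pmod p$. Writing $q=p^r$, all base-$p$ digits of $e=\sum_{i=0}^{n-1}p^{ri}$ lie in $\{0,1\}$, so by Lucas' theorem this occurs exactly when $j=\sum_{i\in T}q^{i}$ for some $T\subseteq\{0,1,\dots,n-1\}$, in which case $\binom{e}{j}\equiv 1$ and $e-j=\sum_{i\notin T}q^{i}$. For such a $j$ one has $(e-j)(q^s-1)+j(q^{n-s}-1)\equiv (e-j)q^{s}+jq^{n-s}-e\pmod{q^n-1}$, and, reducing exponents modulo $n$, the first two terms become $\sum_{k\in A}q^{k}+\sum_{k\in B}q^{k}$ where $A=(\{0,\dots,n-1\}\setminus T)+s$ and $B=T-s$ in $\Z/n\Z$; here the complement of $A$ in $\Z/n\Z$ equals $T+s$. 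Because $|A|+|B|=n$, and because a combination $\sum_{k}\varepsilon_kq^{k}$ with $\varepsilon_k\in\{-1,0,1\}$ and $\sum_k\varepsilon_k=0$ can be a multiple of $q^n-1$ only if it vanishes (its absolute value being at most $(q^n-1)/(q-1)$), the above is $\equiv 0\pmod{q^n-1}$ if and only if $A$ and $B$ are disjoint, i.e.\ $T-s\subseteq T+s$, i.e.\ $T=T+2s\pmod n$. Thus $j$ contributes precisely when $T$ is a union of cosets of $\langle 2s\rangle\le\Z/n\Z$.

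It then remains to list those unions of cosets. If $n$ is odd then $\gcd(2s,n)=1$, so $\langle 2s\rangle=\Z/n\Z$ and only $T=\emptyset$ and $T=\{0,\dots,n-1\}$ occur, i.e.\ $j\in\{0,e\}$; the displayed identity becomes $1+\theta^{e}=N_{q^n/q}(d)\big(1+\delta^{e}\big)$, which is \eqref{eq:two_terms_odd} on recalling $\theta^{e}=N_{q^n/q}(\theta)$ and $\delta^{e}=N_{q^n/q}(\delta)$. If $n$ is even then $s$ is odd and $\gcd(2s,n)=2$, so $\langle 2s\rangle$ has index $2$ and the admissible $T$ are $\emptyset$, the even residues $\{0,2,\dots,n-2\}$, the odd residues $\{1,3,\dots,n-1\}$, and $\{0,\dots,n-1\}$, giving $j\in\{0,\ \tfrac{q^n-1}{q^2-1},\ q\cdot\tfrac{q^n-1}{q^2-1},\ e\}$; the corresponding monomials in $\theta$ are $1$, $N_{q^n/q^2}(\theta)$, $N_{q^n/q^2}(\theta)^{q}$, $N_{q^n/q}(\theta)$, and since $N_{q^n/q^2}(\theta)\in\F_{q^2}$ while $s$ is odd, raising to the $q$-th power on $\F_{q^2}$ coincides with $\sigma$, so $N_{q^n/q^2}(\theta)^{q}=N_{q^n/q^2}(\theta)^{\sigma}$; collecting these four monomials (and likewise the ones in $\delta$) yields exactly \eqref{eq:two_terms_even}. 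The hardest step will be the third paragraph above: controlling exponents simultaneously modulo $p$ (for the binomial coefficients) and modulo $q^n-1$ (for the character sums), and in particular excluding spurious congruences in order to pin down $S$ exactly.
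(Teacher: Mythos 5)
Your proposal is correct, and while it rests on the same underlying strategy as the paper (equating the power sums with the norm exponent $e=1+q+\cdots+q^{n-1}$, which both you and the paper justify via the fibre-size/character argument, and then determining exactly which terms survive the summation over $x$), the computational core is genuinely different. The paper expands $(f(X)/X)^{e}$ as the product of the $\sigma$-conjugates $\prod_{i}\bigl(X^{\sigma^{i+1}-\sigma^i}+\theta^{\sigma^i}X^{\sigma^{i-1}-\sigma^i}\bigr)$ and proves a combinatorial claim about the shift vectors $\mathbf{a}_i,\mathbf{b}_i$ (using that their two nonzero entries are adjacent) to show that the only selections summing to $\mathbf{0}$ are ``all $\mathbf{a}$'s'', ``all $\mathbf{b}$'s'', and, for even $n$, the two alternating patterns; this produces the conjugate norms $N_{q^n/q}(\theta)$, $N_{q^n/q^2}(\theta)$, $N_{q^n/q^2}(\theta)^\sigma$ directly. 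You instead expand $(x^{q^s-1}+\theta x^{q^{n-s}-1})^{e}$ by the binomial theorem, use Lucas' theorem (the base-$p$ digits of $e$ are $0/1$) to see that only $j=\sum_{i\in T}q^i$ survive with coefficient $1$, and then a balanced-digit uniqueness argument reduces the divisibility condition to $T=T+2s$ in $\Z/n\Z$, i.e.\ $T$ a union of cosets of $\langle 2s\rangle$; the parity dichotomy then falls out of $\gcd(2s,n)\in\{1,2\}$, and $\theta^{(q^n-1)/(q^2-1)}=N_{q^n/q^2}(\theta)$, $\theta^{q(q^n-1)/(q^2-1)}=N_{q^n/q^2}(\theta)^q=N_{q^n/q^2}(\theta)^\sigma$ recover the same four monomials. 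Your route trades the paper's adjacency combinatorics for Lucas plus a clean subgroup condition, which arguably explains more transparently why exactly these norm terms appear and would generalize to other exponent patterns; the paper's route avoids any digit analysis and keeps everything inside the conjugate-product formalism. One small point to tighten: in excluding nonzero multiples of $q^n-1$ you bound the combination $\sum_k\varepsilon_k q^k$ (with $\varepsilon_k\in\{-1,0,1\}$) by $(q^n-1)/(q-1)$, which is strictly less than $q^n-1$ only for $q>2$; since the lemma is stated for all $q$, add the observation that attaining absolute value $q^n-1$ when $q=2$ would force all $\varepsilon_k$ to be $\pm1$ of the same sign, contradicting $\sum_k\varepsilon_k=0$, so the conclusion holds for $q=2$ as well.
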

\begin{proof}
	The $(1+q+q^2+\cdots+q^{n-1})$-th power of $f(X)/X$ modulo $X^{\sigma^n}-X$ equals,
	\begin{equation}\label{eq:norm-th_power}
			(X^{\sigma-1}+\theta X^{\sigma^{n-1}-1})(X^{\sigma^2-\sigma}+\theta^\sigma X^{\sigma^{n}-\sigma})\cdots(X^{\sigma^{n}-\sigma^{n-1}}+\theta^{\sigma^{n-1}} X^{\sigma^{n-2}-\sigma^{n-1}}).
	\end{equation}
	As $\gcd(s,n)=1$,  the coefficient of $X^{q^n-1}$ in  \eqref{eq:norm-th_power} modulo $X^{q^n}-X$ is the same as the coefficient of $X^{\sigma^n-1}$ modulo $X^{\sigma^n}-X$. 
	
	To get $X^{\sigma^n-1}$ in \eqref{eq:norm-th_power}, we only have to find all the possible ways to choose exactly one term $Z_i$ from each set $\{X^{\sigma^{i+1}-\sigma^i},  X^{\sigma^{i-1}-\sigma^i}  \}$ such that $\prod_{i=0}^{n-1}Z_i\equiv X^{\sigma^n-1} \pmod{X^{\sigma^n}-X}$. To determine it, we may consider the two terms $X^{\sigma^{i+1}-\sigma ^i}$ and $X^{\sigma^{i-1}-\sigma^i}$ in each $(X^{\sigma^{i+1}-\sigma^i}+\theta^\sigma X^{\sigma^{i-1}-\sigma^i})$ as two arrays of length $n$ defined recursively:
	\begin{align*}
	\mathbf{a}_0&=(-1, 1, 0,\cdots,0)\in \mathbb{Z}^{n}, \\
	\mathbf{b}_0&=(-1, 0,\cdots,0,1) \in \mathbb{Z}^{n},
	\end{align*}
	and for $i\in\{1,2,\cdots,n-1\}$,  $\mathbf{a}_i$ (resp. $\mathbf{b}_i$) is obtained from $\mathbf{a}_{i-1}$ (resp. $\mathbf{b}_{i-1}$) by applying a shift on the right on its entries, that is
	\begin{align*}
	\mathbf{a}_i&=(\overbrace{0,\cdots, 0}^{i}, -1, 1, 0,\cdots,0)\in \mathbb{Z}^{n}, \\
	\mathbf{b}_i&=(\overbrace{0,\cdots, 0}^{i-1}, 1, -1, 0,\cdots,0) \in \mathbb{Z}^{n}.
	\end{align*}
	In this way, $\mathbf{a}_i$ represents the exponents of $\sigma$ in the term $X^{\sigma^{i+1}-\sigma ^i}$ and, similarly, $\mathbf{b}_i$ represents the exponents of $\sigma$ in the term $X^{\sigma^{i-1}-\sigma ^i}$, for any $i \in \{0,\ldots,n-1\}$.
	Since our aim is to find the coefficient of $X^{\sigma^n-1}$, we have to find all the possible sum in which every addend is exactly one element from $\mathbf{a}_i$ and $\mathbf{b}_i$ for each $i\in \{0,\cdots, n-1\}$ and such that the sum equals to the zero vector $\mathbf{0}$, which represents $X^{\sigma^n-1}$.
	For any $n$, it is easy to see that 
	\begin{equation}\label{eq:two_terms_1}
		\sum_{i=0}^{n-1}\ba_i=\sum_{i=0}^{n-1}\bb_i=\mathbf{0}.
	\end{equation}
	When $n$ is even, we also have
	\begin{equation}\label{eq:two_terms_2}
		\sum_{2\mid i}\ba_i+\sum_{2\nmid j}\bb_j=\sum_{2\nmid i}\ba_i+\sum_{2\mid j}\bb_j=\mathbf{0}.
	\end{equation}
	
	\medskip
	\noindent\textbf{Claim}:  All the possible ways to get $\mathbf{0}$ are given in \eqref{eq:two_terms_1} and \eqref{eq:two_terms_2}. 
	\medskip
	
	To prove the claim, we first have to notice that the two nonzero elements in $\ba_i$ or $\bb_i$ are adjacent to each other. Consequently, if $\ba_i$ appears in the sum,  to make the $(i+2)$-th position in the array $0$, we have to add exactly one element from $\{\ba_{i+1}, \bb_{i+1}\}$. Moreover, if 
	\[\ba_i+\ba_{i+1}=(\overbrace{0,\cdots, 0}^{i}, -1, 0,1, 0,\cdots,)\]
	is contained in the sum, then it is easy to see that the $(i-1)$-th term of the sum has to be $\ba_{i-1}$. The same result also holds for $\bb_i+\bb_{i+1}$. 
	Whereas, if $\ba_i+\bb_{i+1}$ is contained in the sum, since $\ba_i+\bb_{i+1}=\mathbf{0}$, then we necessarily obtain the case \eqref{eq:two_terms_2}.
	As a consequence, we complete the proof of the claim.
	
	By \textbf{Claim}, the coefficient of $X^{q^n-1}$ in \eqref{eq:norm-th_power} is
	\[c_{q^n-1}=
	\begin{cases}
		1+N_{q^n/q}(\theta), & \text{ if }2\nmid n;\\
				1+N_{q^n/q}(\theta)+N_{q^n/q^2}(\theta)+N_{q^n/q^2}(\theta)^\sigma, & \text{ if }2\mid n.
	\end{cases}
	\]
	By the well known equation 
	\[
	\sum_{x\in\F_{q^n}} x^e = 	\begin{cases}
	-1, & q^n-1 \mid e;\\
	0,& \text{otherwise},
	\end{cases}
	\]
	we have
	\[
	\sum_{x\in \F_{q^n}} (x^{q^s-1}+\theta x^{q^{n-s}-1})^{1+q+\cdots+q^{n-1}}= c_{q^n-1}\sum_{x\in\F_{q^n}}x^{q^n-1}=-c_{q^n-1}.
	\]
	As $L_g=L_f$, by comparing the coefficients of $X^{q^n-1}$ of $(f(X)/X)^{1+q+\cdots+q^{n-1}}$ and $(g(X)/X)^{1+q+\cdots+q^{n-1}}$ modulo $X^{q^n}-X$, we get \eqref{eq:two_terms_odd} and \eqref{eq:two_terms_even}.
\end{proof}

\section{Equivalence of the Lunardon-Polverino linear sets and their automorphism group}\label{sec:LP}

In this section, we completely determine the equivalence between different members of the Lunardon-Polverino maximum scattered linear sets in a projective line as well as their automorphism groups.
The main results of this section are the following. 

\begin{theorem}\label{th:LP_main_1}
	Let $n,s,t\in \Z$ such that $\gcd(s,n)=\gcd(t,n)=1$, $n\geq 3$ and $1\leq s,t <n/2$. For any nonzero elements $\theta, \delta\in \F_{q^n}$ satisfying $N_{q^n/q}(\theta), N_{q^n/q}(\delta)\neq 1$, let  $f=X^{q^s}+\theta X^{q^{s(n-1)}}$ and $g=X^{q^t}+\delta X^{q^{t(n-1)}}$. 	The linear sets $L_f$ and $L_{g}$ are equivalent  if and only if $s=t$ and one of the following collections of conditions is satisfied:
	\begin{itemize}
		\item[(a)]
		$2\nmid n$, $N_{q^n/q}(\theta)\in \{N_{q^n/q}(\delta^\tau), N_{q^n/q}(1/\delta^\tau)\}$ for some $\tau\in \Aut(\F_{q^n})$
		\item[(b)] 
		$2\mid n$, $N_{q^n/q^2}(\theta)\in \{N_{q^n/q^2}(\delta^\tau),N_{q^n/q^2}(1/\delta^\tau)\}$  for some $\tau\in \Aut(\F_{q^n})$.
	\end{itemize}
\end{theorem}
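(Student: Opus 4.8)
\emph{Strategy.} We prove the two implications separately; in both directions the essential point is to translate equivalence of linear sets into a statement about $q$-polynomials. The first step is the following reduction: since $L_f$ and $L_g$ are maximum scattered and, because $N_{q^n/q}(\theta),N_{q^n/q}(\delta)\neq 1$, not of pseudoregulus type, an equivalence $L_f^{\varphi}=L_g$ lifts to an identification of the defining subspaces up to a scalar, so that (using that the $\GamL$-class of a Lunardon--Polverino linear set consists of the orbits of $U_f$ and $U_{\widehat f}$) one gets $U_g\sim_{\GamL}U_f$ or $U_g\sim_{\GamL}U_{\widehat f}$; writing such an equivalence with a matrix $\bigl(\begin{smallmatrix}a&b\\ c&d\end{smallmatrix}\bigr)$ yields $g\circ(aI+bf^{\rho})=cI+df^{\rho}$ in $\mathcal L_{n,q}$ (or the same with $\widehat f^{\rho}$, by Lemma~\ref{le:LS_adjoint}), and comparing the monomials $X^{q^{j}}$ on the two sides --- only $j\in\{0,s,s(n-1)\}$ occur on the right, whereas a nonzero $b$ forces four further exponents which, because $\gcd(s,n)=1$ and $1\le s<n/2$, are pairwise distinct (apart from a few small-$n$ configurations treated by hand, using Lemma~\ref{le:gcd}) --- forces $b=0$ and then $c=0$. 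Since $\widehat f=\theta^{q^s}\bigl(X^{q^s}+\theta^{-q^s}X^{q^{s(n-1)}}\bigr)$ is again a scalar multiple of a Lunardon--Polverino polynomial, it suffices to study $L_g=L_{\lambda f^{\rho}}$ for some $\lambda\in\fqn^{*}$ and $\rho\in\Aut(\fqn)$ --- where $f^{\rho}=X^{q^s}+\theta^{\rho}X^{q^{s(n-1)}}$ and $L_g=L_{\lambda f^{\rho}}$ means equality of point sets --- and to observe at the end that the adjoint case only exchanges the two alternatives in (a), (b).

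\emph{Necessity.} Assume $L_g=L_{\lambda f^{\rho}}$. Apply Lemma~\ref{le:jcta2018}: $g=X^{q^t}+\delta X^{q^{t(n-1)}}$ has nonzero coefficients only in positions $t,n-t$ and $\lambda f^{\rho}$ only in positions $s,n-s$, while the product $\alpha_k\alpha_{n-k}^{q^k}$ of \eqref{eq:jcta_2} is nonzero for $k=t$; hence $\{t,n-t\}=\{s,n-s\}$, so $s=t$ by $1\le s,t<n/2$. Then \eqref{eq:jcta_2} with $k=s$ gives $\delta^{q^s}=\lambda^{1+q^s}(\theta^{q^s})^{\rho}$. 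Put $L=N_{q^n/q}(\lambda)\in\fq^{*}$ and $a=N_{q^n/q}(\theta)^{\rho}\in\fq^{*}$; applying $N_{q^n/q}$ (resp., when $n$ is even, $N_{q^n/q^2}$, using that $s$ is odd so $N_{q^n/q^2}(x^{q^s})=N_{q^n/q^2}(x)^{q}$) to this identity gives
\[
N_{q^n/q}(\delta)=L^{2}a\qquad\text{(}n\text{ odd)},\qquad\qquad N_{q^n/q^2}(\delta)=L\,N_{q^n/q^2}(\theta)^{\rho}\qquad\text{(}n\text{ even)}.
\]
Now apply Lemma~\ref{le:two_terms}. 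For $n$ odd, \eqref{eq:two_terms_odd} reads $1+N_{q^n/q}(\delta)=L(1+a)$. For $n$ even, using the identity $1+N_{q^n/q}(x)+N_{q^n/q^2}(x)+N_{q^n/q^2}(x)^{\sigma}=N_{q^2/q}\bigl(1+N_{q^n/q^2}(x)\bigr)$ (again because $\sigma$ acts on $\F_{q^2}$ as the $q$-Frobenius), \eqref{eq:two_terms_even} becomes $N_{q^2/q}\bigl(1+N_{q^n/q^2}(\delta)\bigr)=L\,N_{q^2/q}\bigl(1+N_{q^n/q^2}(\theta)^{\rho}\bigr)$, and substituting the norm relation above and expanding the $\F_{q^2}/\fq$-norm collapses it, exactly as in the odd case, to the single equation $(L-1)(La-1)=0$. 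Hence $L=1$ or $L=1/a$. If $L=1$ then $N_{q^n/q}(\delta)=N_{q^n/q}(\theta)^{\rho}$ (resp. $N_{q^n/q^2}(\delta)=N_{q^n/q^2}(\theta)^{\rho}$); if $L=1/a$ then $N_{q^n/q}(\delta)=N_{q^n/q}(\theta)^{-\rho}$ (resp. $N_{q^n/q^2}(\delta)=N_{q^n/q^2}(\theta)^{-q\rho}$). Since $\rho|_{\fq}$ (resp. $\rho|_{\F_{q^2}}$, together with a $q$-Frobenius twist) is an arbitrary automorphism extending to some $\tau\in\Aut(\fqn)$, these say precisely $N_{q^n/q}(\theta)\in\{N_{q^n/q}(\delta^{\tau}),N_{q^n/q}(1/\delta^{\tau})\}$ when $2\nmid n$, and $N_{q^n/q^2}(\theta)\in\{N_{q^n/q^2}(\delta^{\tau}),N_{q^n/q^2}(1/\delta^{\tau})\}$ when $2\mid n$; that is, (a) resp. (b) holds.

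\emph{Sufficiency.} Conversely, assume $s=t$ and (a) (resp. (b)). We realize $L_f\sim L_g$ as a composition of three elementary maps. (i) For any $\rho\in\Aut(\fqn)$, the map $(x,y)\mapsto(x^{\rho},y^{\rho})$ gives $L_f\sim L_{X^{q^s}+\theta^{\rho}X^{q^{s(n-1)}}}$. (ii) By Lemma~\ref{le:LS_adjoint}, $L_f=L_{\widehat f}$ with $\widehat f=\theta^{q^s}\bigl(X^{q^s}+\theta^{-q^s}X^{q^{s(n-1)}}\bigr)$, so the diagonal map $(x,y)\mapsto(x,\theta^{-q^s}y)$ gives $L_f\sim L_{X^{q^s}+\theta^{-q^s}X^{q^{s(n-1)}}}$, whose parameter has norm $N_{q^n/q}(1/\theta)$ (resp. $\F_{q^2}$-norm $N_{q^n/q^2}(1/\theta)^{q}$). (iii) If $\theta'$ satisfies $N_{q^n/q}(\theta')=N_{q^n/q}(\theta)$ when $2\nmid n$ (resp. $N_{q^n/q^2}(\theta')=N_{q^n/q^2}(\theta)$ when $2\mid n$): since $\gcd(2s,n)=1$ for $n$ odd and $\gcd(2s,n)=2$ for $n$ even, the image of $\mu\mapsto\mu^{q^s-q^{s(n-1)}}$ on $\fqn^{*}$ is exactly $\ker N_{q^n/q}$ (resp. $\ker N_{q^n/q^2}$), so $\theta'/\theta=\mu^{q^s-q^{s(n-1)}}$ for some $\mu$, and the diagonal map $(x,y)\mapsto(\mu x,\mu^{q^s}y)$ carries $U_f$ onto $U_{X^{q^s}+\theta'X^{q^{s(n-1)}}}$. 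Composing (i)--(iii) produces $L_f\sim L_{X^{q^s}+\delta X^{q^{s(n-1)}}}=L_g$ whenever $N_{q^n/q}(\delta)$ (resp. $N_{q^n/q^2}(\delta)$) lies in the $\Aut(\fq)$-orbit (resp. $\Aut(\F_{q^2})$-orbit) of $N_{q^n/q}(\theta)$ or of $N_{q^n/q}(1/\theta)$ --- that is, exactly when (a) (resp. (b)) holds.

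\emph{The main obstacle} is the reduction described in the first paragraph: everything after it is a finite-field computation powered by Lemmas~\ref{le:gcd},~\ref{le:jcta2018} and~\ref{le:two_terms}, so the heart of the argument is to rule out the ``non-linear'' equivalences, i.e. to show $b=0$ in $g\circ(aI+bf^{\rho})=cI+df^{\rho}$. This is where the hypotheses $\gcd(s,n)=1$ and $1\le s<n/2$ are used essentially, to control the arithmetic of the exponents $q^{j}$, together with a separate and somewhat tedious treatment of the small values of $n$ (and of the coincidence $s+t=n/2$ for $n$ even).
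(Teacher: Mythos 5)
Your sufficiency argument and your ``diagonal case'' necessity computation are sound and essentially coincide with what the paper does (its Lemmas \ref{le:LP_condition_sufficient_1}--\ref{le:LP_condition_NS_d} use Hilbert 90, the adjoint, Lemma \ref{le:jcta2018} and Lemma \ref{le:two_terms} in just the way you describe). The problem is your opening reduction. You assert that an equivalence of the \emph{point sets} $L_f^{\varphi}=L_g$ ``lifts to an identification of the defining subspaces up to a scalar'', justified by the claim that the $\GamL$-class of a Lunardon--Polverino linear set consists exactly of the orbits of $U_f$ and $U_{\widehat f}$. That claim is precisely the hard content of the theorem, not something you may quote: equality or projective equivalence of linear sets does not in general determine the $\GamL$-orbit of the defining subspace (the pseudoregulus linear set itself is defined by $\varphi(n)/2$ pairwise $\GamL$-inequivalent subspaces $U_{x^{q^s}}$), and no determination of the class of LP linear sets for general $n$ was available before this work. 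Once you assume it, everything that follows --- the identity $g\circ(aI+bf^{\rho})=cI+df^{\rho}$ in $\mathcal L_{n,q}$ and the monomial comparison forcing $b=0$, $c=0$ --- lives at the subspace level and is a comparatively routine exponent count; so your ``main obstacle'' paragraph locates the difficulty in the wrong place, and the argument as written begs the question.

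For contrast, the paper never leaves the point-set level: it writes the equivalence as
\[
\left\{ x^{q^s-1}+\theta x^{q^{n-s}-1} : x\in \F_{q^n}^*\right\}=\left\{\frac{cx+d\overline g(x)}{ax+b\overline g(x)} : x\in \F_{q^n}^*\right\},
\]
and when $b\neq 0$ it introduces the inverse $h$ of $x\mapsto \frac{ax+\overline g(x)}{c-ad}$, whose coefficients $r_i$ it pins down through Lemma \ref{le:jcta2018} combined with the recursion coming from $a h+\overline g\circ h=(c-ad)\,\mathrm{id}$; the resulting case analysis (Cases 2.1--2.4, according to $s\equiv \pm t,\pm 2t,\pm 3t \pmod n$ or none of these) shows $d=0$, and then Lemmas \ref{le:LP_inverse_f} and \ref{le:LP_inverse_f_n=4} dispose of the remaining antidiagonal possibility $\{cx/f(x)\}=\{g(x)/x\}$ for $n>4$ and $n=4$ respectively. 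This is the machinery your proposal would need to replace. To repair your write-up you would either have to prove the class-$2$ statement for LP linear sets for all $n\geq 3$ (which amounts to redoing this analysis) or argue directly with the fractional value sets as the paper does; as it stands the proposal has a genuine gap at its first step.
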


\begin{theorem}\label{th:LP_main_auto}
		Let $n,s\in \Z$ such that $\gcd(s,n)=1$, $n\geq 3$ and $1\leq s <n/2$. For any nonzero element $\theta\in \F_{q^n}$ satisfying $N_{q^n/q}(\theta)\neq 1$, let  $f=X^{q^s}+\theta X^{q^{s(n-1)}}$. 	Then the $\PGamL$-automorphism group of $L_f$ is
			\[\Aut(L_f)=\begin{cases}
			\mathcal{D}, & n>4;\\
			\mathcal{D}\cup\mathcal{C}, & n=4,
			\end{cases}
			\]
			where
		\begin{equation}\label{eq:cal_D}
			\mathcal{D} :=\left\{
			\left(
			\begin{pmatrix}
			1 & 0 \\ 
			0 & d
			\end{pmatrix} , \tau
			\right)\in \PGamL(2,q^n) : d^{\sigma+1}=\left( \frac{\theta}{\theta^\tau} \right)^\sigma
			\right\}
		\end{equation}
		and 
		\begin{equation}\label{eq:cal_C}
		\mathcal{C} :=\left\{
		\left(
		\begin{pmatrix}
		0 & 1 \\ 
		c & 0
		\end{pmatrix} , \tau
		\right)\in \PGamL(2,q^4) : \left(\frac{c}{\theta^{\tau q}-\theta^{-\tau q^3 }}\right)^{q+1}=\left( -\theta^{\tau q^3+1} \right)^q
		\right\}.
		\end{equation}
		In particular, its size is
			
	\begin{equation}\label{eq:LP_main_auto}
			\#\Aut(L_f) = 
			\begin{cases}
		N_\tau(\theta), & 2\nmid n, 2\mid q;\\
				2N_\tau(\theta), & 2\nmid n, 2\nmid q;\\
				(q+1)N_\tau(\theta), & 2\mid n, n>4;\\
				2(q+1)N_\tau(\theta), & n=4,				
			\end{cases}
		\end{equation}
		where 
		\[
			N_\tau(\theta):=\#\left\{\tau\in \Aut(\F_{q^n}): N_{q^n/q}(\theta) = N_{q^n/q}(\theta^\tau) \text{ or } 1/N_{q^n/q}(\theta^\tau)  \right\}
		\]
		for odd $n$ and 
		\[
		N_\tau(\theta):=\# \left\{\tau\in \Aut(\F_{q^n}): N_{q^n/q^2}(\theta) = \{N_{q^n/q^2}(\theta^\tau) \text{ or }  1/N_{q^n/q^2}(\theta^\tau) \right\}
		\]
		for even $n$.
\end{theorem}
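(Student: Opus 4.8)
The plan is to translate the condition $L_f^\varphi=L_f$ for $\varphi\in\PGamL(2,q^n)$ into a relation between two $q$-polynomials and then to exploit the coefficient identities of Lemma~\ref{le:jcta2018}, the norm identities of Lemma~\ref{le:two_terms}, and — for the rigidity part — the classification of Theorem~\ref{th:LP_main_1}. Write $\sigma\colon x\mapsto x^{q^s}$, let $A=\begin{pmatrix}a&b\\c&d\end{pmatrix}$ be a matrix representing $\varphi$, and note that applying $\tau$ to the coefficients of $f$ yields $f^\tau=X^{q^s}+\theta^\tau X^{q^{s(n-1)}}$.

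I would first determine $\Aut(L_f)_{\langle(0,1)\rangle}$, the stabiliser of the point $\langle(0,1)\rangle$. If $\varphi$ fixes this point then $b=0$; substituting $w=x^\tau$ gives $L_f^\varphi=L_g$ with $g=\tfrac ca X+\tfrac da f^\tau$, which is a genuine $q$-polynomial since $\langle(0,1)\rangle\notin L_f$. Its coefficient of $X$ is $c/a$, so Lemma~\ref{le:jcta2018}\eqref{eq:jcta_1} forces $c=0$; thus $\varphi$ is diagonal, $A\sim\mathrm{diag}(1,d)$, and $L_f^\varphi=L_{df^\tau}$. Lemma~\ref{le:jcta2018}\eqref{eq:jcta_2} with $k=s$ then yields the relation $d^{\sigma+1}=(\theta/\theta^\tau)^\sigma$ defining $\mathcal D$, and Lemma~\ref{le:two_terms} (applied with $\delta=\theta^\tau$) supplies the further constraint \eqref{eq:two_terms_odd} (resp.\ \eqref{eq:two_terms_even}); eliminating $N_{q^n/q}(d)$ between these two identities produces exactly $N_{q^n/q}(\theta)\in\{N_{q^n/q}(\theta^\tau),1/N_{q^n/q}(\theta^\tau)\}$ for odd $n$, and its $N_{q^n/q^2}$-analogue for even $n$, i.e.\ the condition selecting the $\tau$ that can occur. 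For the converse I would verify directly that each admissible $(d,\tau)$ stabilises $L_f$: when $N_{q^n/q}(d)=1$ one solves $\lambda^{q^s-1}=d^{-1}$ and the substitution $X\mapsto\lambda^{-1}X$ carries $df^\tau$ to $f$, while when $N_{q^n/q}(d)=1/N_{q^n/q}(\theta^\tau)$ one applies the adjoint identity of Lemma~\ref{le:LS_adjoint} to $df^\tau$ and then a similar substitution. This identifies $\Aut(L_f)_{\langle(0,1)\rangle}$ with $\mathcal D$.

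Next I would show that for $n>4$ no element of $\Aut(L_f)$ can move $\langle(0,1)\rangle$, whereas for $n=4$ the only new possibility is one interchanging $\langle(0,1)\rangle$ with $\langle(1,0)\rangle$. For general $\varphi\in\Aut(L_f)$, since $\langle(0,1)\rangle\notin L_f^\varphi=L_f$ one may write $L_f^\varphi=L_G$ for a $q$-polynomial $G$ assembled from $A$ and $f^\tau$; the constraints on $G$ coming from Lemma~\ref{le:jcta2018} (and Lemma~\ref{le:two_terms}), together with Theorem~\ref{th:LP_main_1} — which already records when two LP linear sets can coincide after a collineation, and in particular forces the $s$-parameter to be preserved — leave, for $n>4$, only the diagonal solutions already found, and for $n=4$ ($s=1$) the antidiagonal ones. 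Computing $L_f^\varphi$ explicitly for $A=\begin{pmatrix}0&1\\c&0\end{pmatrix}$, reducing to a two-term $q$-polynomial, and reading off Lemma~\ref{le:jcta2018} produces the relation defining $\mathcal C$, with sufficiency again checked by a substitution/adjoint argument. Since matrices in $\mathcal D$ and $\mathcal C$ have different shapes, their union is disjoint, giving $\Aut(L_f)=\mathcal D$ for $n>4$ and $\Aut(L_f)=\mathcal D\cup\mathcal C$ for $n=4$.

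Finally, for the cardinalities I would count solutions of the defining equations. The equation $d^{\sigma+1}=(\theta/\theta^\tau)^\sigma$ has $\gcd(q^s+1,q^n-1)$ solutions in $\F_{q^n}^*$, evaluated via Lemma~\ref{le:gcd} as $1$ for $n$ odd and $q$ even, as $2$ for $n$ odd and $q$ odd, and as $q+1$ for $n$ even; for $n=4$ the equation for $c$ in $\mathcal C$ likewise contributes $\gcd(q+1,q^4-1)=q+1$ solutions. Combining these with the count $N_\tau(\theta)$ of admissible field automorphisms — and keeping track, via the norm relations, of which of the several $d$'s attached to a given $\tau$ actually survive — gives the four displayed values of $\#\Aut(L_f)$. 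I expect the genuinely hard step to be the rigidity argument of the third paragraph: excluding every collineation that displaces $\langle(0,1)\rangle$ when $n>4$ is precisely where plain coefficient comparisons do not suffice and one must feed in Theorem~\ref{th:LP_main_1}; by contrast the sufficiency checks and the $\gcd$ bookkeeping, although delicate around degenerate norm values such as $N_{q^n/q}(\theta)=-1$, should be routine.
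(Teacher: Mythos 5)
Your overall route is essentially the paper's own: the diagonal stabiliser is obtained exactly as in Lemma \ref{le:LP_condition_sufficient_1}, Remark \ref{rm:power_to_norm} and Lemma \ref{le:LP_condition_NS_d} (Hilbert 90 and the adjoint of Lemma \ref{le:LS_adjoint} for sufficiency, \eqref{eq:jcta_2} and Lemma \ref{le:two_terms} for necessity), the $n=4$ antidiagonal elements come from the explicit inverse of $f$ as in Lemma \ref{le:LP_inverse_f_n=4}, and the cardinalities follow from Lemma \ref{le:gcd} via Remark \ref{rm:LP_n_d_solutions}; the published proof is precisely ``rerun the proof of Theorem \ref{th:LP_main_1} with $\theta=\delta$ and $s=t$''.

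The one genuine gap is in your rigidity step. You propose to exclude, for $n>4$, every collineation moving $\langle(0,1)\rangle_{\F_{q^n}}$ --- and, for all $n$, every matrix that is neither diagonal nor antidiagonal --- by ``feeding in Theorem \ref{th:LP_main_1}''. But that theorem is only an existence statement about when $L_f$ and $L_g$ are equivalent; with $\theta=\delta$ and $s=t$ it is vacuously satisfied and carries no information about which matrices $A$ realise the self-equivalence, so in particular it cannot rule out an automorphism with $b\neq 0$ and $d\neq 0$. What is actually needed is the internal \textbf{Claim} in the proof of Theorem \ref{th:LP_main_1}: write the inverse of $x\mapsto ax+\overline{g}(x)$ as $\sum_i r_i x^{q^i}$ and run the coefficient recursion coming from \eqref{eq:jcta_1}--\eqref{eq:jcta_2} (here Case 2.4, since $s=t$) to force $d=0$, then invert the matrix to force $a=0$; only after that does Lemma \ref{le:LP_inverse_f} (whose bijectivity hypothesis is handled by Remark \ref{rem:inv}) eliminate the antidiagonal shape for $n>4$, while Lemma \ref{le:LP_inverse_f_n=4} produces $\mathcal{C}$ for $n=4$. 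So the step you yourself flag as the hard one is not dispatched by the theorem you cite; you must reproduce, or explicitly invoke, that case analysis from its proof. With that substitution your argument coincides with the paper's, including the $\gcd$ bookkeeping for \eqref{eq:LP_main_auto}.
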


\begin{remark}\label{re:s->n-s}
	Let $n,s\in \Z$ such that $\gcd(s,n)=1$, $s >n/2$. It is easy to see that the linear set $L_f$ defined by $f=X^{q^s}+\theta X^{q^{s(n-1)}}$ is equivalent to $L_g$ where $g=X^{q^{n-s}}+\delta X^{q^{(n-s)(n-1)}}$, with $\delta=\theta^{-1}$. So, the assumptions in Theorem \ref{th:LP_main_1} on $s$ and $t$ are not restrictive.
\end{remark}

In order to get Theorems \ref{th:LP_main_1} and \ref{th:LP_main_auto}, we need to prove a series of lemmas.

\begin{lemma}\label{le:LP_condition_sufficient_1}
	Let $s$ and $n$ be two relatively prime positive integers, and $1\leq s<n/2$. Let $\sigma\colon x \in \fqn \mapsto x^{q^s}\in \fqn$. For any nonzero elements $\theta, \delta\in \F_{q^n}$ satisfying $N_{q^n/q}(\theta), N_{q^n/q}(\delta)\neq 1$, let  $f=X^{\sigma}+\theta X^{\sigma^{n-1}}$ and $g=X^{\sigma}+\delta X^{\sigma^{n-1}}$. There exists $d\in \F_{q^n}$ such that the linear sets $L_f=L_{dg}$  if there exists $\alpha\in \F_{q^n}$ such that
	\begin{equation}\label{eq:LP_condition_d}
		\alpha^{\sigma^2-1} = \left(\frac{\theta}{\delta} \right)^\sigma.
	\end{equation}
\end{lemma}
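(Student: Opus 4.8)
The statement to prove is Lemma~\ref{le:LP_condition_sufficient_1}: if there exists $\alpha \in \F_{q^n}$ with $\alpha^{\sigma^2-1} = (\theta/\delta)^\sigma$, then $L_f = L_{dg}$ for a suitable $d$. The natural strategy is to produce the equivalence \emph{explicitly} by conjugating $g$ by a diagonal $\fqn$-semilinear map and then matching coefficients, exploiting the two-term shape of both polynomials. Concretely, I would look for $d, e \in \F_{q^n}^*$ such that substituting $x \mapsto e x$ into $dg(x)$ and dividing by $ex$ yields exactly $f(x)/x$, i.e. I want
\[
  d\,\frac{g(ex)}{ex} = \frac{f(x)}{x} \quad\text{for all } x\in\F_{q^n}^*.
\]
Expanding $g(ex) = e^\sigma x^\sigma + \delta e^{\sigma^{n-1}} x^{\sigma^{n-1}}$, this becomes $d e^{\sigma-1} x^{\sigma-1} + d\,\delta\, e^{\sigma^{n-1}-1} x^{\sigma^{n-1}-1} = x^{\sigma-1} + \theta x^{\sigma^{n-1}-1}$, so I need the two scalar equations
\[
  d\,e^{\sigma-1} = 1, \qquad d\,\delta\,e^{\sigma^{n-1}-1} = \theta.
\]
From the first, $d = e^{1-\sigma}$; substituting into the second gives $\delta\, e^{\sigma^{n-1}-\sigma} = \theta$, i.e. $e^{\sigma^{n-1}-\sigma} = \theta/\delta$. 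Since $\sigma^{n-1} = \sigma^{-1}$ as maps on $\fqn$ (because $\sigma^n = \mathrm{id}$), we have $e^{\sigma^{n-1}-\sigma} = e^{\sigma^{-1}-\sigma}$; applying $\sigma$ to both sides, this is equivalent to $e^{1-\sigma^2} = (\theta/\delta)^\sigma$. Setting $\alpha = e^{-1}$, or rather taking $e$ to be the inverse of the given $\alpha$ (or $e = \alpha$ and reading the exponent as $\sigma^2 - 1$ versus $1 - \sigma^2$, which only flips a sign that I can absorb by replacing $\alpha$ with $\alpha^{-1}$), the hypothesis $\alpha^{\sigma^2-1} = (\theta/\delta)^\sigma$ is exactly what makes such an $e$ exist.

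So the key steps, in order, are: (1) set up the substitution $x\mapsto ex$ and the scaling by $d$; (2) expand and collect the two monomials, obtaining the system $d e^{\sigma-1}=1$ and $d\delta e^{\sigma^{n-1}-1}=\theta$; (3) solve for $d$ in terms of $e$ and reduce to the single equation $e^{\sigma^{n-1}-\sigma} = \theta/\delta$; (4) rewrite $\sigma^{n-1} = \sigma^{-1}$ and apply $\sigma$ to recognize this as $e^{1-\sigma^2} = (\theta/\delta)^\sigma$, matching the hypothesis up to inversion; (5) conclude that the diagonal semilinear map $\mathrm{diag}(1,\text{const}) $ composed with $x \mapsto ex$ on both coordinates sends $L_g$ to $L_f$, hence $L_f = L_{dg}$ with this $d = e^{1-\sigma}$. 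It then remains only to observe that the hypotheses $N_{q^n/q}(\theta), N_{q^n/q}(\delta) \neq 1$ guarantee both linear sets are genuine Lunardon–Polverino (scattered) linear sets, so the conclusion $L_f = L_{dg}$ is meaningful, though this is not actually needed for the equality itself.

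The computation here is entirely routine, so there is no serious obstacle; the only point requiring a little care is the bookkeeping of the exponent: making sure that the passage from $e^{\sigma^{n-1}-\sigma}$ to $\alpha^{\sigma^2-1}$ is done correctly (using $\sigma^n = \mathrm{id}$ and possibly replacing $\alpha$ by $\alpha^{-1}$), and checking that the resulting $d$ is nonzero — which is automatic since $e \neq 0$ forces $d = e^{1-\sigma} \neq 0$. I would also note explicitly that the substitution $x \mapsto ex$ corresponds to the element $\left(\begin{smallmatrix} e & 0 \\ 0 & e \end{smallmatrix}\right)$ of $\mathrm{GL}(2,q^n)$, which is a scalar and hence acts trivially on $\PG(1,q^n)$; the genuine content of the equivalence is in the scaling of $f$ versus $g$ by $d$, consistent with the statement's phrasing $L_f = L_{dg}$.
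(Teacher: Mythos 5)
Your proposal is correct and is essentially the paper's own argument: both produce the equivalence by a scalar substitution of the variable and matching the two coefficients, reducing to the single exponent equation that, after applying $\sigma$ and using $\sigma^n=\mathrm{id}$, is exactly the hypothesis $\alpha^{\sigma^2-1}=(\theta/\delta)^\sigma$, with the same $d=\alpha^{\sigma-1}$. The only cosmetic difference is that you substitute into $g$ (with $e=\alpha^{-1}$) while the paper substitutes $\alpha X$ into $f$.
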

\begin{proof}
	Replace $X$ by $\alpha X$ in $f(X)/X$. Let $d=\alpha^{\sigma-1}$. Then we obtain
	\[
		\frac{f(\alpha X)}{\alpha X}=\alpha^{\sigma-1} X^{\sigma-1} +\theta \alpha^{\sigma^{n-1}-1}X^{\sigma^{n-1}-1}=d\frac{g(X)}{X},
	\]
	since $(d \delta)^\sigma=\alpha^{\sigma^2-\sigma}\delta^\sigma = \theta^\sigma\alpha^{1-\sigma}$, that is $d\delta=\theta\alpha^{\sigma^{n-1}-1}$. Therefore,  $L_f=L_{dg}$.
\end{proof}

\begin{remark}\label{rm:power_to_norm}
	The sufficient condition given in Lemma \ref{le:LP_condition_sufficient_1} is actually the necessary and sufficient condition for the existences of permutation $q$-polynomials $L_1$ and $L_2$ such that $\cC_g=\{L_1 \circ h \circ L_2(X): h\in \cC_f  \}$, where
	\[ \cC_f := \{aX+bf(X) : a,b\in\F_{q^n} \}. \]
	See \cite[Definition 4.1 and Theorem 4.4]{lunardon_generalized_2018} for more details. By Hilbert's theorem 90, the condition given in \eqref{eq:LP_condition_d} is equivalent to
	\begin{equation}\label{eq:LP_condition_sufficient}
		\begin{cases}
			N_{q^n/q}(\theta)=N_{q^n/q}(\delta), & 2\nmid n;\\
			N_{q^n/q^2}(\theta)=N_{q^n/q^2}(\delta), & 2\mid n.
		\end{cases}
	\end{equation}
\end{remark}

On the other hand, we can get some necessary condition for $L_f=L_{dg}$ for some $d\in \F_{q^n}$ which is slightly different from \eqref{eq:LP_condition_sufficient}.

\begin{lemma}\label{le:LP_condition_NS_d}
	Let $s$ and $n$ be two relatively prime positive integers, and $1\leq s<n/2$. Let $\sigma\colon x \in \fqn \mapsto x^{q^s}\in \fqn$. For any nonzero elements $\theta, \delta\in \F_{q^n}$ satisfying $N_{q^n/q}(\theta), N_{q^n/q}(\delta)\neq 1$, let  $f=X^{\sigma}+\theta X^{\sigma^{n-1}}$ and $g=X^{\sigma}+\delta X^{\sigma^{n-1}}$. There exists $d\in \F_{q^n}$ such that the linear sets $L_f=L_{dg}$  if and only if 
	\begin{equation}\label{eq:LP_condition_NS_d_odd}
		N_{q^n/q}(\theta)\in \{N_{q^n/q}(\delta) , N_{q^n/q}(1/\delta)\},
	\end{equation}
	when $n$ is odd, and 
	\begin{equation}\label{eq:LP_condition_NS_d_even}
	N_{q^n/q^2}(\theta)\in\{N_{q^n/q^2}(\delta) , N_{q^n/q^2}(1/\delta)^\sigma\},
	\end{equation}
	when $n$ is even.
\end{lemma}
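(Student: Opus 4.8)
The plan is to treat the two directions separately, with the ``only if'' direction being the genuine work. For the ``if'' direction I would split on parity. When $n$ is odd: by Hilbert 90 the condition $N_{q^n/q}(\theta)=N_{q^n/q}(\delta)$ is equivalent to solvability of $\alpha^{\sigma^2-1}=(\theta/\delta)^\sigma$ (since $\gcd(s,n)=1$ forces $\sigma^2$ to generate $\Gal(\fqn/\fq)$ when $n$ is odd, so $x\mapsto x^{\sigma^2-1}$ has image exactly the norm-one subgroup — wait, more carefully, $N_{q^n/q}$ is the norm for the generator $\sigma^2$ as well when $\gcd(2,n)=1$, hence the equivalence with \eqref{eq:LP_condition_sufficient}), so Lemma \ref{le:LP_condition_sufficient_1} gives $L_f=L_{dg}$. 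For the case $N_{q^n/q}(\theta)=N_{q^n/q}(1/\delta)$, I would apply Lemma \ref{le:LS_adjoint}: the adjoint of $g$ is $\hat g = \theta'X^\sigma + \delta' X^{\sigma^{n-1}}$ up to relabeling, and $L_g=L_{\hat g}$; rescaling $\hat g$ puts it in the form $X^\sigma + (1/\delta)^{\sigma^{?}}X^{\sigma^{n-1}}$, whose relevant norm invariant is $N_{q^n/q}(1/\delta)$, and then apply the first subcase. When $n$ is even the same steps run with $N_{q^n/q^2}$ in place of $N_{q^n/q}$, using \eqref{eq:LP_condition_sufficient} from Remark \ref{rm:power_to_norm} and being careful that the adjoint introduces the Frobenius twist $\sigma$ that appears in \eqref{eq:LP_condition_NS_d_even}.

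For the ``only if'' direction the main tool is Lemma \ref{le:two_terms}. Suppose $L_f=L_{dg}$ for some $d\in\fqn^*$; write $g' = dg = d(X^\sigma+\delta X^{\sigma^{n-1}})$ and apply Lemma \ref{le:two_terms} with the roles $(\theta,\delta,d)$ as stated there. When $n$ is odd, \eqref{eq:two_terms_odd} gives
\[
1+N_{q^n/q}(\theta) = N_{q^n/q}(d)\bigl(1+N_{q^n/q}(\delta)\bigr).
\]
This is one equation in the unknown $N:=N_{q^n/q}(d)$, so it only pins down $N_{q^n/q}(d)$ once we know that $L_f=L_{dg}$ forces a second relation. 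The second relation I would extract is from Lemma \ref{le:jcta2018}: comparing the $k$-th products $\alpha_k\alpha_{n-k}^{q^k}$ for $f$ and $dg$. With $s\ne n/2$ the only nonzero coefficients of $f$ are at positions $s$ and $s(n-1)\equiv -s$, so \eqref{eq:jcta_2} with $k=s$ reads $1\cdot\theta^{q^s} = (d)(d\delta)^{q^s} = d^{1+q^s}\delta^{q^s}$, i.e. $\theta = d^{1+q^s}\delta$ after a Frobenius application (raising to $q^{n-s}$). Taking $N_{q^n/q}$ of both sides of $\theta = d^{\sigma+1}\delta$ gives $N_{q^n/q}(\theta)=N_{q^n/q}(d)^2 N_{q^n/q}(\delta)$. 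Now I have two polynomial equations in $N_{q^n/q}(d)$; eliminating it yields a quadratic whose two solutions are exactly $N_{q^n/q}(\theta)=N_{q^n/q}(\delta)$ and $N_{q^n/q}(\theta)=N_{q^n/q}(1/\delta)$ — indeed from $\theta\delta^{-1} = d^{\sigma+1}$ and the norm relation $1+N(\theta) = N(d)(1+N(\delta))$ with $N(\theta)=N(d)^2N(\delta)$, substitute $N(\theta)/N(\delta) = N(d)^2$ to get $1+N(d)^2N(\delta) = N(d)+N(d)N(\delta)$, i.e. $(N(d)-1)(N(d)N(\delta)-1)=0$, giving $N(d)=1$ (so $N(\theta)=N(\delta)$) or $N(d)N(\delta)=1$ (so $N(\theta) = N(d)^2N(\delta) = N(\delta)/N(d)^{-2}\cdot\ldots$, which simplifies to $N(1/\delta)$). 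For even $n$ the corresponding computation uses \eqref{eq:two_terms_even} together with the $k=s$ case of \eqref{eq:jcta_2}, and the norm $N_{q^n/q^2}$ together with the extra $\sigma$-twist; the factorization should again be a product of two linear factors over the relevant norm subgroup, yielding \eqref{eq:LP_condition_NS_d_even}.

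The step I expect to be the main obstacle is the even-$n$ case of the ``only if'' direction: \eqref{eq:two_terms_even} is genuinely more complicated than \eqref{eq:two_terms_odd} (it involves $N_{q^n/q^2}(\theta)$, $N_{q^n/q^2}(\theta)^\sigma$, and their sum together with $1$), so after substituting $\theta = d^{\sigma+1}\delta$ one must carefully track how $N_{q^n/q^2}$ and the Frobenius twist interact — in particular $N_{q^n/q^2}(d^{\sigma+1})$ is \emph{not} simply $N_{q^n/q^2}(d)^2$ because $\sigma$ need not fix the subfield $\fq^2$, which is precisely why the asymmetric $N_{q^n/q^2}(1/\delta)^\sigma$ rather than $N_{q^n/q^2}(1/\delta)$ shows up in \eqref{eq:LP_condition_NS_d_even}. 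I would handle this by writing $d^{\sigma+1}\delta = \theta$, taking $N_{q^n/q^2}$ and separately $N_{q^n/q^2}(\cdot)^\sigma$ of both sides, and feeding both into \eqref{eq:two_terms_even}; the resulting system in the two ``half-norm'' quantities $N_{q^n/q^2}(d)$ and $N_{q^n/q^2}(d)^\sigma$ should again factor, and a short case analysis (possibly invoking Lemma \ref{le:gcd} to control which roots of unity can occur) pins down the two admissible values. A minor secondary obstacle is justifying that the $k=s$ instance of \eqref{eq:jcta_2} is the \emph{only} constraint from Lemma \ref{le:jcta2018} that matters here and that the hypothesis $s<n/2$ (so $s\ne n-s$) is what makes positions $s$ and $n-s$ distinct, which I would note explicitly.
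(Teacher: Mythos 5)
Your proposal follows essentially the same route as the paper's proof: sufficiency via Hilbert 90 (Remark \ref{rm:power_to_norm}), Lemma \ref{le:LP_condition_sufficient_1} and the adjoint trick of Lemma \ref{le:LS_adjoint}, and necessity by combining \eqref{eq:jcta_2} with $k=s$ (giving $d^{\sigma+1}=(\theta/\delta)^\sigma$) with Lemma \ref{le:two_terms} and factoring the resulting quadratic in $N_{q^n/q}(d)$. The even case you flag as the main obstacle works out exactly as you predict: since $N_{q^n/q^2}\bigl(d^{\sigma+1}\bigr)=N_{q^n/q}(d)$, equation \eqref{eq:two_terms_even} collapses to the same quadratic as in the odd case, and the $\sigma$-twist in \eqref{eq:LP_condition_NS_d_even} appears when translating $N_{q^n/q}(d)=1/N_{q^n/q}(\delta)$ back through $N_{q^n/q}(d)=N_{q^n/q^2}(\theta/\delta)$.
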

\begin{proof}
	First we show that \eqref{eq:LP_condition_NS_d_odd} and \eqref{eq:LP_condition_NS_d_even} are sufficient. As we have proved in Remark \ref{rm:power_to_norm} that \eqref{eq:LP_condition_sufficient} is equivalent to \eqref{eq:LP_condition_d}, if $N_{q^n/q}(\theta)=N_{q^n/q}(\delta) $ for $2\nmid n$ and $N_{q^n/q^2}(\theta)=N_{q^n/q^2}(\delta) $ for $2\mid n$, then there exists $d\in \F_{q^n}$ such that $L_f=L_{dg}$.
	
	 By \eqref{eq:adj}, the adjoint of $f$ is $\hat{f}=\theta^{\sigma} X^\sigma  + X^{\sigma^{n-1}}$.	By Lemma \ref{le:LS_adjoint}, $L_f=L_{\hat{f}}$ and again we may apply Remark \ref{rm:power_to_norm} and Lemma \ref{le:LP_condition_sufficient_1}, obtaining the existence of $d'\in \F_{q^n}$ such that 
	\[
	\left\{ x^{\sigma-1} + \frac{1}{\theta^\sigma} x^{\sigma^{n-1}-1} :x\in \F_{q^n}^* \right\}
	=\left\{ d'\left(x^{\sigma-1} + {\delta} x^{\sigma^{n-1}-1}\right) :x\in \F_{q^n}^* \right\},
	\]
	provided that $N_{q^n/q}(\theta)=N_{q^n/q}(1/\delta)$ for $n$ odd and $N_{q^n/q^2}(\theta)=N_{q^n/q^2}(1/\delta)^\sigma$ for $n$ even. Therefore, we finish the proof of the sufficiency.
	
	Next we prove the necessity of \eqref{eq:LP_condition_NS_d_odd} and \eqref{eq:LP_condition_NS_d_even}.
	By \eqref{eq:jcta_2} for $k=s$, we get
	\[
	\theta^\sigma = d^{\sigma+1} \delta^{\sigma},
	\]
	i.e. 
	\begin{equation}\label{eq:LP_d^sigma+1}
	d^{\sigma+1} = \left(  \frac{\theta}{\delta}\right)^\sigma.
	\end{equation}
	
	When $2\nmid n$, \eqref{eq:LP_d^sigma+1} implies 
	\[ N_{q^n/q}(d)^2=N_{q^n/q}\left(  \frac{\theta}{\delta}\right). \]
	Plugging it into \eqref{eq:two_terms_odd}, we have
	\[1+N_{q^n/q}(\delta)N_{q^n/q}(d)^2 = N_{q^n/q}(d)\left( 1+N_{q^n/q}(\delta) \right),\]
	which is
	\begin{equation}\label{eq:LP_Nd_odd}
		  N_{q^n/q}(d)^2 -\left( 1+\frac{1}{N_{q^n/q}(\delta)} \right)N_{q^n/q}(d) + \frac{1}{N_{q^n/q}(\delta)} =0,
	\end{equation}
	that is $N_{q^n/q}(d)$ is a root of 
	\[ X^2 -\left( 1+\frac{1}{N_{q^n/q}(\delta)} \right)X + \frac{1}{N_{q^n/q}(\delta)}. \]
	It follows that $N_{q^n/q}(d) = 1$ or $N_{q^n/q}(d) =1/N_{q^n/q}(\delta)$ which implies \eqref{eq:LP_condition_NS_d_odd}.
	
	When $2\mid n$, \eqref{eq:LP_d^sigma+1} implies
	\begin{equation}	\label{eq:LP_Nd_even}
		N_{q^n/q}(d)= N_{q^n/q}(d)^\sigma=N_{q^n/q^2}\left(  \frac{\theta}{\delta}\right).
	\end{equation}
	Hence $N_{q^n/q^2}(\theta)=N_{q^n/q}(d)N_{q^n/q^2}(\delta)$ and $N_{q^n/q}(\theta)=N_{q^n/q}(d)^2N_{q^n/q}(\delta)$. Plugging it into  \eqref{eq:two_terms_even}, we have
	\begin{align*}
		&1+N_{q^n/q}(d)^2N_{q^n/q}(\delta)+N_{q^n/q}(d)N_{q^n/q^2}(\delta)+N_{q^n/q}(d)N_{q^n/q^2}(\delta)^\sigma\\ =&N_{q^n/q}(d)(	1+N_{q^n/q}(\delta)+N_{q^n/q^2}(\delta)+N_{q^n/q^2}(\delta)^\sigma),
	\end{align*}
	which is \eqref{eq:LP_Nd_odd} again.	It follows that $N_{q^n/q}(d) = 1$ or $N_{q^n/q}(d)=1/N_{q^n/q}(\delta)$. Together with \eqref{eq:LP_Nd_even}, we get $N_{q^n/q^2}(\theta)=N_{q^n/q^2}(\delta)$ or $N_{q^n/q^2}(\theta)=N_{q^n/q^2}(1/\delta)^\sigma$.
\end{proof}

\begin{remark}\label{rm:LP_n_d_solutions}
   In the proof of Lemma \ref{le:LP_condition_NS_d}, when \eqref{eq:LP_condition_NS_d_odd} or \eqref{eq:LP_condition_NS_d_even} is satisfied, we see that 
\begin{equation}\label{eq:*1}
	\{ d\in \F_{q^n}: L_f=L_{dg} \} = \left\{d\in \F_{q^n}: d^{\sigma+1} = \left( \frac{\theta}{\delta} \right)^\sigma \right\}.
\end{equation}

	Consequently, by Lemma \ref{le:gcd}, there are
\begin{equation}\label{eq:*2}
	\gcd(q^s+1, q^n-1) = \gcd(q+1, q^n-1) = 
\begin{cases}
	1, & 2\nmid n, 2\mid q;\\
	2, & 2\nmid n, 2\nmid q;\\
	q+1, & 2\mid n,
\end{cases}
\end{equation}

	possible choices of $d$ such that $L_f=L_{dg}$.
\end{remark}

\begin{lemma}\label{le:LP_inverse_f}
		Let $n,s,t\in \Z$ such that $\gcd(s,n)=\gcd(t,n)=1$,  and $1\leq s,t <n/2$. For any integer $n>4$ and prime power $q$, define $f=X^{q^s}+\theta X^{q^{n-s}}$ and  $g=X^{q^t}+\delta X^{q^{n-t}}$ with $N_{q^n /q}(\theta), N_{q^n /q}(\delta) \notin \{0,1\}$. If the map $x\mapsto f(x)$ is bijective on $\F_{q^n}$, then  there is no nonzero constant $c$ such that 
	\begin{equation}\label{eq:LP_inverse_f}
	\left\{\frac{cx}{f(x)}:x\in \mathbb{F}_{q^n}^*\right\}=\left\{ \frac{g(x)}{x}:x\in \mathbb{F}_{q^n}^*\right\}.
	\end{equation}
\end{lemma}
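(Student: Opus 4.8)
Assume, towards a contradiction, that such a nonzero constant $c$ exists. Since $f$ is bijective on $\F_{q^n}$ with $f(0)=0$, it has a compositional inverse $f^{-1}\in\mathcal L_{n,q}$, and the substitution $y=f(x)$ turns the left-hand set of \eqref{eq:LP_inverse_f} into $\{c\,f^{-1}(y)/y:y\in\F_{q^n}^*\}$. Thus \eqref{eq:LP_inverse_f} becomes $\{c\,f^{-1}(y)/y:y\in\F_{q^n}^*\}=\{g(x)/x:x\in\F_{q^n}^*\}$, and since two $q$-polynomials define the same point set of $\mathrm{PG}(1,q^n)$ exactly when they have the same value set of $h(x)/x$, this gives $L_{cf^{-1}}=L_g$. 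The plan is then to apply Lemma~\ref{le:jcta2018} to $cf^{-1}=\sum_i\alpha_iX^{q^i}$ and $g=\sum_i\beta_iX^{q^i}$, where $\beta_t=1$, $\beta_{n-t}=\delta$ and $\beta_i=0$ otherwise; note that $\beta_0=0$ because $1\le t<n/2$, and that $t$ is odd whenever $n$ is even since $\gcd(t,n)=1$.

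The core of the argument is to determine which coefficients of $f^{-1}=\sum_i b_iX^{q^i}$ are forced to vanish and which cannot. Comparing the coefficient of $X^{q^j}$ on both sides of $f(f^{-1}(X))\equiv X\pmod{X^{q^n}-X}$ yields $b_{j-s}^{q^s}+\theta\,b_{j+s}^{q^{n-s}}=1$ for $j\equiv 0$ and $=0$ otherwise (indices mod $n$); rearranging, $b_{i+2s}=-\theta^{-q^s}b_i^{q^{2s}}$ for every $i\not\equiv-s\pmod n$. Since $\theta\ne0$, this relation makes $b_i$ and $b_{i+2s}$ vanish together. If $n$ is odd then $\gcd(2s,n)=1$, so these relations link all of $\Z_n$ into a single path and hence all $b_i$ are simultaneously zero or nonzero; as $f^{-1}\ne 0$ they are all nonzero, in particular $b_0\ne 0$. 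If $n$ is even then $s$ is odd and $\gcd(2s,n)=2$, so the relations split into a path through the odd indices (the link at $i\equiv-s$ being absent) and a full cycle through the even indices. Traversing the even cycle once gives $b_0\bigl(N_{q^n/q^2}(\theta)^{q^s}-(-1)^{n/2}\bigr)=0$, and one checks that $N_{q^n/q^2}(\theta)^{q^s}=(-1)^{n/2}$ would force $N_{q^n/q}(\theta)=1$, contrary to hypothesis; hence $b_0=0$ and, running the even cycle, $b_i=0$ for every even $i$. Since $f^{-1}\ne 0$, the odd-indexed $b_i$ are not all zero, and being joined along a path they are therefore all nonzero. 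In summary, $\alpha_0=cb_0\ne 0$ when $n$ is odd, while $\alpha_i=cb_i\ne 0$ precisely for odd $i$ when $n$ is even.

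To finish I invoke Lemma~\ref{le:jcta2018}. If $n$ is odd (so $n\ge 5$), \eqref{eq:jcta_1} gives $\alpha_0=\beta_0=0$, contradicting $\alpha_0\ne 0$. If $n$ is even with $n\ge 6$, there are $n/2\ge 3$ odd residues modulo $n$, hence an odd $k\notin\{t,n-t\}$; then $n-k$ is also odd and lies outside $\{t,n-t\}$, so $\beta_k=\beta_{n-k}=0$ while $\alpha_k,\alpha_{n-k}\ne 0$, and \eqref{eq:jcta_2} would give $\alpha_k\alpha_{n-k}^{q^k}=\beta_k\beta_{n-k}^{q^k}=0$, a contradiction. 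Since $n>4$ means exactly ``$n$ odd with $n\ge 5$'' or ``$n$ even with $n\ge 6$'', the claim follows in all cases.

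The main obstacle is the middle paragraph: correctly classifying the vanishing and non-vanishing coefficients of $f^{-1}$, and in particular handling the even-index cycle when $n$ is even, where the degenerate value of $N_{q^n/q^2}(\theta)$ must be excluded (using either bijectivity of $f$ or, more directly, the hypothesis $N_{q^n/q}(\theta)\ne 1$). This is also precisely where $n=4$ escapes the argument, since then the odd residues are just $\{1,3\}=\{t,n-t\}$ — consistent with the fact that for $n=4$ the inverse of $f$ is again a binomial, so the statement genuinely fails there. A minor point to verify carefully is the reduction in the first paragraph, namely that equality of the two value sets really does yield $L_{cf^{-1}}=L_g$ as point sets.
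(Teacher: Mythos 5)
Your proof is correct, and it rests on the same two pillars as the paper's: the coefficient recursion for $f^{-1}$ obtained from $f(f^{-1}(X))\equiv X \pmod{X^{q^n}-X}$, and Lemma~\ref{le:jcta2018} applied to $L_{cf^{-1}}=L_g$ after the substitution $y=f(x)$ (the paper's \eqref{eq:ff-1}); the parity split and the role of $n>4$ are also the same. The difference is the direction in which the zero/nonzero propagation is run. The paper starts from the assumed set equality: \eqref{eq:jcta_1} gives $r_0=0$, the recursion propagates this through the even (for odd $n$, all) indices, and for even $n$ \eqref{eq:jcta_2} supplies an odd index $i_0\notin\{t,n-t\}$ with $r_{i_0}=0$, whose propagation forces $r_s=r_{n-s}=0$ and contradicts \eqref{eq:LP_i_s}. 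You instead determine the support of $f^{-1}$ unconditionally: for odd $n$ the single path through $\Z_n$ makes all $b_i$ nonzero, while for even $n$ closing the even-index cycle forces $b_0=0$ unless $N_{q^n/q^2}(\theta)^{q^s}=(-1)^{n/2}$, a case correctly excluded since it would give $N_{q^n/q}(\theta)=1$; hence exactly the odd-indexed $b_i$ are nonzero, and the contradiction then lands directly on \eqref{eq:jcta_1} (odd $n$) or on \eqref{eq:jcta_2} at an odd $k\notin\{t,n-t\}$ (even $n\geq 6$). Your variant costs an extra computation (the cycle product and the norm argument) that the paper avoids by letting the set equality plant the zeros, but it buys a cleaner intermediate statement — the exact support of $f^{-1}$, independent of any equivalence assumption — and it makes transparent why $n=4$ escapes, namely that the only odd residues are $t$ and $n-t$, in accordance with Lemma~\ref{le:LP_inverse_f_n=4}. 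I see no gap in your argument.
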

\begin{proof}
	Suppose that the inverse map of $f(x)$ is $x\mapsto \sum_{i=0}^{n-1}r_i x^{q^i}$ with $r_i\in \F_{q^n}$. Consequently, 
	\begin{equation*}
	\left(\sum_{i=0}^{n-1}r_i X^{q^i}\right)^{q^s}+\theta\left(\sum_{i=0}^{n-1}r_i X^{q^i}\right)^{q^{n-s}}\equiv X\pmod {X^{q^n}-X}, 
	\end{equation*}
	which means
	\begin{equation}\label{eq:eqinv}
	    \sum_{i=0}^{n-1}\left(r_{i-s}^{q^s}+\theta r_{i+s}^{q^{n-s}}\right)X^{q^i}\equiv X\pmod {X^{q^n}-X},
	\end{equation}
	where the subscript of each $r_i$ is computed modulo $n$.
	Note that
	\begin{equation}\label{eq:ff-1} \left\{\frac{cx}{f(x)}:x\in \mathbb{F}_{q^n}^*\right\}=\left\{\frac{cf^{-1}(x)}{x}:x\in \mathbb{F}_{q^n}^*\right\}, \end{equation}
	since $f$ is invertible.
	Comparing the coefficients of each term in \eqref{eq:eqinv}, we get
	\begin{equation} 
	\label{eq:LP_i_s}
	r_{n-s}^{q^s}+\theta r_{s}^{q^{n-s}}=1,
	\end{equation} 
	and
	\begin{equation} 
	\label{eq:LP_i_2_n-1}
	r_{i-s}^{q^s}=-\theta r_{i+s}^{q^{n-s}}, i\in\{1, 2, 3, ...n-1\}.
	\end{equation}   
	
	Suppose to the contrary that there exists a nonzero constant $c$ such that \eqref{eq:LP_inverse_f} is satisfied. Applying Lemma \ref{le:jcta2018} on \eqref{eq:LP_inverse_f}, taken into account \eqref{eq:ff-1}, we see 
	\[r_0=0.\]
	For $i\in\{1, 2, \cdots,n-1\}$, if $r_{i-s}=0$, then by \eqref{eq:LP_i_2_n-1}  $r_{i+s}=0$. Thus $r_{2s}=0$, because $r_{s-s}=r_0=0$. By induction, 
	\begin{equation}\label{eq:LP_r_2ls=0}
	r_{2\ell s}=0 \text{ for }\ell =0, 1,2,\cdots.
	\end{equation}
	
	If $n$ is odd, then \eqref{eq:LP_r_2ls=0} implies $r_{2\cdot \frac{n-1}{2} s}=r_{n-s}=0$ and $r_{2\cdot \frac{n+1}{2} s}=r_{s}=0$ which contradicts \eqref{eq:LP_i_s}.
	
	If $n$ is even, then \eqref{eq:LP_r_2ls=0} implies $r_{2i}=0$ for all $i=0,1,\cdots, n/2-1$. By \eqref{eq:jcta_2}, 
	\[ r_i r_{n-i}^{q^i} = 0, \]
	for $i\neq t, n-t$, which are equivalent to $r_i=0$ or $r_{n-i}=0$. As $n>4$, there always exists an odd integer $i_0$ satisfying $i_0\neq t, n-t$ and $r_{i_0}=0$. By induction using \eqref{eq:LP_i_2_n-1}, $r_{2\ell s + i_0}=0$ for $\ell =0,1,\cdots$ which implies $r_{s}=r_{n-s}=0$ because $s$ and $n-s$ are both odd. This again contradicts \eqref{eq:LP_i_s}.
\end{proof}

For $n=4$, Lemma \ref{le:LP_inverse_f} does not hold any more.

\begin{lemma}\label{le:LP_inverse_f_n=4}
	For prime power $q$, define $f=X^{q}+\theta X^{q^{3}}$ and  $g=X^{q}+\delta X^{q^{3}}$ over $\F_{q^4}$ with $N_{q^4 /q}(\theta), N_{q^4 /q}(\delta) \notin \{0,1\}$. There exists $c$ such that 
	\begin{equation}\label{eq:LP_inverse_f_n=4}
	\left\{\frac{cx}{f(x)}:x\in \mathbb{F}_{q^4}^*\right\}=\left\{ \frac{g(x)}{x}:x\in \mathbb{F}_{q^4}^*\right\}
	\end{equation}
	if and only if $\theta^{q^2+1}\in \{\delta^{q^2+1 },\delta^{-(q^3+q)}\}$.
	
	When one of the above conditions is satisfied,  the number of $c\in \F_{q^4}$ such that \eqref{eq:LP_inverse_f_n=4} holds is $q+1$.
\end{lemma}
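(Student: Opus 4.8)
The strategy is to run the same inversion computation as in Lemma~\ref{le:LP_inverse_f}, but now specialised to $n=4$, where the obstruction argument used there breaks down precisely because one cannot find an odd index $i_0 \notin \{t, n-t\} = \{1,3\}$. So instead of deriving a contradiction, I expect to find that the coefficient vector $(r_0,\dots,r_3)$ of $f^{-1}$ is forced into a one- (or two-)parameter shape, and then to read off exactly when the resulting $q$-polynomial $c\, f^{-1}(X)$ can be brought by the previously established machinery (Remark~\ref{rm:power_to_norm}, Lemma~\ref{le:LP_condition_sufficient_1}, and the adjoint trick of Lemma~\ref{le:LS_adjoint}) into the shape $d\,g(X)/X$ for some $d$.

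Concretely, first I would compute $f^{-1}$ for $f = X^q + \theta X^{q^3}$ over $\F_{q^4}$ explicitly: the relations \eqref{eq:LP_i_s}--\eqref{eq:LP_i_2_n-1} become $r_3^q + \theta r_1^{q^3} = 1$ and $r_0^q = -\theta r_2^{q^3}$, $r_1^q = -\theta r_3^{q^3}$, $r_2^q = -\theta r_0^{q^3}$ (indices mod $4$, with $s=1$). Combining $r_0^q = -\theta r_2^{q^3}$ with $r_2^q = -\theta r_0^{q^3}$ gives $r_0^{q^2} = -\theta^q r_2^{q^4} = -\theta^q r_2$ and hence $r_0^{q^3} = -\theta^{q^2}\theta^{-1} r_0$ after back-substitution; so $r_0(\theta^{q^2+1} \cdot (\text{something})) $-type identities appear, and either $r_0 = r_2 = 0$ or $N_{q^4/q^2}(\theta) = \theta^{q^2+1}$ equals a specific value. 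A parallel computation on $r_1, r_3$ using $r_1^q = -\theta r_3^{q^3}$, $r_3^q = -\theta r_1^{q^3}$ together with the normalisation $r_3^q + \theta r_1^{q^3} = 1$ pins down the pair $(r_1, r_3)$. This should reveal that $f^{-1}(X) = r_1 X^q + r_3 X^{q^3}$ with $r_0 = r_2 = 0$ is the generic situation, i.e. $f^{-1}$ is itself a Lunardon--Polverino-type binomial, so that $c f^{-1}(X)/X$ again has the two-term shape and Lemma~\ref{le:LP_condition_NS_d} (with the adjoint observation) applies directly to extract the norm condition.

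The condition $\theta^{q^2+1} \in \{\delta^{q^2+1}, \delta^{-(q^3+q)}\}$ should then emerge as follows: writing $f^{-1}$ as a binomial $r_1 X^q + r_3 X^{q^3}$ and normalising via $c$ so that the coefficient of $X^q$ is $1$, we get a polynomial $X^q + (r_3/r_1) X^{q^3}$; matching this against $d(X^q + \delta X^{q^3})$ via the necessary-and-sufficient norm test \eqref{eq:LP_condition_NS_d_even} (which for $n=4$ reads $N_{q^4/q^2}(r_3/r_1) \in \{N_{q^4/q^2}(\delta), N_{q^4/q^2}(1/\delta)^\sigma\}$), and using the explicit value of $r_3/r_1$ in terms of $\theta$ (which I expect to be a simple power of $\theta$, so that $N_{q^4/q^2}(r_3/r_1)$ is a power of $\theta^{q^2+1}$), yields exactly the two stated alternatives $\delta^{q^2+1}$ and $\delta^{-(q^3+q)} = (\delta^{q^2+1})^{-q}$. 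For the converse, when one of these norm conditions holds I would simply reverse the chain: build $d$ from Lemma~\ref{le:LP_condition_sufficient_1} (or its adjoint variant), compose with $f^{-1}$, and record that this produces a valid $c$. Finally, the count ``$q+1$ choices of $c$'': once one valid $c_0$ is found, any other works iff $c/c_0$ lies in the multiplicative subgroup fixing the value set of $f^{-1}(x)/x$, and for a scattered binomial of this shape that stabiliser has order $\gcd(q^s+1, q^4-1) = \gcd(q+1, q^4-1) = q+1$ by Lemma~\ref{le:gcd} — exactly the count already computed in Remark~\ref{rm:LP_n_d_solutions} for the $d$-parameter, mirrored here for $c$.

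**Main obstacle.** The delicate point is the special-case bookkeeping when $r_0, r_2$ are \emph{not} both zero: the $n=4$ inversion equations do admit, for special $\theta$, solutions with $f^{-1}$ having four nonzero terms, and I must show that even in that branch the value set $\{cf^{-1}(x)/x\}$ either still reduces to a two-term linear set's value set (so the norm test applies) or else fails to match any $L_g$ — i.e. that no ``exotic'' equivalence sneaks in. Handling this cleanly will likely require invoking Lemma~\ref{le:jcta2018} one more time on the four-term $f^{-1}$ to kill the cross terms, together with the $N_{q^4/q}(\theta) \neq 1$ hypothesis to rule out the degenerate sub-branch; this is the step where I expect the computation to be least routine. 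Everything else — the explicit inversion, the norm manipulations, and the count — is mechanical given the lemmas already in hand.
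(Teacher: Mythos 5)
Your proposal is correct and takes essentially the same route as the paper: the paper simply exhibits the inverse directly, checking that $h=X^q-\theta^{-q^3}X^{q^3}$ satisfies $h(f(X))=(\theta^q-\theta^{-q^3})X$, and then applies Lemma \ref{le:LP_condition_NS_d} with $n=4$ and the count from Remark \ref{rm:LP_n_d_solutions}, exactly as you do after recovering the same binomial inverse from the coefficient equations. Your ``main obstacle'' is in fact vacuous: the equations for $r_0,r_2$ force $r_0^{q^3}(1-\theta^{q^2+1})=0$, and $\theta^{q^2+1}=1$ would give $N_{q^4/q}(\theta)=1$, so the hypothesis already kills the four-term branch, as you anticipated.
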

\begin{proof}
	Let $h= X^q - \theta^{-q^3}X^{q^3}$. Then 
	\[h(f(X)) = (\theta^q-\theta^{-q^3})X. \]
Thus $x\mapsto \frac{h(x)}{\theta^q-\theta^{-q^3}}$ is the inverse map of $f(x)$,  and \eqref{eq:LP_inverse_f_n=4} equals
	\[\left\{d\cdot\frac{h(x)}{x}:x\in \mathbb{F}_{q^4}^*\right\}=\left\{ \frac{g(x)}{x}:x\in \mathbb{F}_{q^4}^*\right\}, \]
	where $d=\frac{c}{\theta^q-\theta^{-q^3}}$. 
	
	By Lemma \ref{le:LP_condition_NS_d} for $n=4$, there exists such a $d\in \F_{q^4}$ if and only if 
	$N_{q^4/q^2}(\theta^{-q^3}) = \theta^{-(q^3+q)} = \delta^{q^2+1}$ or $\delta^{-(q^3+q)}$, which is equivalent to  $\theta^{q^2+1}=\delta^{q^2+1 }$ or $\delta^{-(q^3+q)}$.
	
	The last part of the statement of Lemma \ref{le:LP_inverse_f_n=4} follows from Remark \ref{rm:LP_n_d_solutions}.
\end{proof}

\begin{remark}\label{rem:inv}
Comparing the conditions in Lemma \ref{le:LP_inverse_f} and Lemma \ref{le:LP_inverse_f_n=4}, we notice that we have additionally assumed that $f$ is bijective on $\F_{q^n}$ in Lemma \ref{le:LP_inverse_f}. In fact, when $n$ is even, $f$ is always bijective.
Consider $f=X^{q^s}+\theta X^{q^{n-s}}\in \mathcal{L}_{n,q}$ with $\gcd(n, s)=1$ and $N_{q^n /q}(\theta) \notin \{0,1\}$. Suppose that $f$ is not bijective, that is there exists $x \in \fqn^*$ such that
\[ \theta=-x^{q^s-q^{n-s}}. \]
This happens if and only in $N_{q^n/q^e}(\theta)=N_{q^n/q^e}(-1)$, where $e=\gcd(n,n-2s)=\gcd(n,2)$. If $n$ is even then $N_{q^n/q}(\theta)=N_{q^n/q}(-1)=1$, which is a contradiction to the fact that $N_{q^n /q}(\theta) \notin \{0,1\}$.
If $n$ is odd then $e=1$ and $N_{q^n/q}(\theta)=-1$ and in this case $f$ has $q$ roots over $\fqn$.
Therefore, $f$ is bijective if and only if $n$ is even or $n$ is odd and $N_{q^n/q^e}(\theta)\ne-1$.
\end{remark}
Now we are ready to prove Theorem \ref{th:LP_main_1}.

\begin{proof}[Proof of Theorem \ref{th:LP_main_1}]
	Assume that $L_f$ and $L_g$ are equivalent which implies the existence of $M =\begin{pmatrix}
	a & b \\ 
	c & d
	\end{pmatrix} $ over $\F_{q^n}$ and $\tau\in\Aut(\F_{q^n})$ such that
	\begin{equation}\label{eq:LP_main_equivalence}
		\left\{ x^{q^s-1}+\theta x^{q^{n-s}-1} : x\in \F_{q^n}^*  \right\} = \left\{ \frac{cx+d\overline{g}(x)}{ax+b\overline{g}(x)}: x\in \F_{q^n}^*  \right\},
	\end{equation}
	where $\overline{g}(x)=x^{q^t}+\delta^\tau x^{q^{n-t}} $.
	
	Depending on the value of $b$, we separate the rest part into two cases.
	
	\medskip
	
	\noindent\textbf{Case 1:} $b=0$.  As $M$ is viewed as an element in $\PGL(2,q^n)$, we can assume that $a=1$ and \eqref{eq:LP_main_equivalence} becomes
	\begin{equation}\label{eq:LP_main_equi_b=0}
		\left\{ x^{q^s-1}+\theta x^{q^{n-s}-1} : x\in \F_{q^n}^*  \right\} = \left\{ c+d\frac{\overline{g}(x)}{x}: x\in \F_{q^n}^*  \right\}.
	\end{equation}
	By \eqref{eq:jcta_1} in Lemma \ref{le:jcta2018}, we derive $c=0$. By \eqref{eq:jcta_2} for $k=s$, we can get $s=t$; otherwise $\theta^{q^t} = 0$ which contradicts the assumption that $N_{q^n/q}(\theta) \neq 0$. Now \eqref{eq:LP_main_equi_b=0} is exactly $L_f=L_{d \overline{g}}$. By Lemma \ref{le:LP_condition_NS_d}, we derive (a) and (b) of Theorem \ref{th:LP_main_1} as necessary conditions.
	
	\medskip
	
	\noindent\textbf{Case 2:} $b\neq 0$. Without loss of generality, we assume that $b=1$. 
	We are going to prove the following claim:
		
		\medskip
		\textbf{Claim:} $d=0$.
		
Suppose to the contrary that $d\neq 0$. Now
	\[\frac{cx+d\overline{g}(x)}{ax+b\overline{g}(x)}=d+\frac{(c-ad)x}{ax+\overline{g}(x)},\]
	for $x\in\fqn$.
	By definition of the equivalence of linear sets, the map $ x\mapsto ax+\overline{g}(x)$ is invertible (otherwise $\langle (0,1) \rangle_{\F_{q^n}}$ would be in $L_f$) and $c-ad\neq 0$. Let $ h(x)=\sum_{i=0}^{n-1} r_i x^{q^i} $ denote the inverse map of $x\mapsto \frac{ ax+\overline{g}(x)}{c-ad}$. Setting $y=\frac{ ax+\overline{g}(x)}{c-ad}$, we get
	\[
		d+\frac{(c-ad)x}{ax+\overline{g}(x)}=d+\frac{h(y)}{y},
	\]
	for $x\in\fqn$.
	By the definition of $h$ and by comparing the coefficients of $ah(Y)+\overline{g}(h(Y))\equiv (c-ad)Y \pmod{Y^{q^n}-Y}$, we obtain
	\begin{equation}\label{eq:d=0_inverse_0}
		a r_0 +r_{n-t}^{q^t}+\delta^\tau r_t^{q^{n-t}} = c-ad
	\end{equation} 
	and
	\begin{equation}\label{eq:d=0_inverse_i}
		a r_i +r_{i-t}^{q^t}+\delta^\tau r_{i+t}^{q^{n-t}} = 0,
	\end{equation} 	
	for $i\in\{1,2,\cdots,n-1\}$.
	
	Now \eqref{eq:LP_main_equivalence} becomes
	\begin{equation}\label{eq:LP_main_equi_b=1}
	\left\{ x^{q^s-1}+\theta x^{q^{n-s}-1} : x\in \F_{q^n}^*  \right\} = \left\{ d+\sum_{i=0}^{n-1}r_i y^{q^i-1}: y\in \F_{q^n}^*  \right\}.
	\end{equation}
	By \eqref{eq:jcta_1} of Lemma \ref{le:jcta2018},
	\begin{equation}\label{eq:r_0}
		r_0=-d.
	\end{equation}
	Moreover, by \eqref{eq:jcta_2} of Lemma \ref{le:jcta2018},
	\begin{equation}\label{eq:r_kr_(n-k)}
		r_k r_{n-k}^{q^k}=
		\begin{cases}
			\theta^{q^s}, &k=s;\\
			\theta, &k=n-s;\\
			0, &k\neq 0,s,n-s.
		\end{cases}
	\end{equation}
	Depending on the value of $t$, we consider 3 cases. In each case, we will get a contradiction which means the assumption $d\neq 0$ cannot hold.
	
	\medskip
	
	\textbf{Case 2.1:} $s\neq \pm t, \pm 2t, \pm 3t\pmod{n}$. As $\gcd(n,s)=\gcd(n,t)=1$, $n$ must be larger than $4$.
	
	By \eqref{eq:r_kr_(n-k)},
	\[ r_{2t} r_{n-2t}^{q^{2t}}=0,\]
	which means at least one of $r_{2t}$ and $r_{n-2t}$ equals $0$. Without loss of generality, we assume that $r_{n-2t}=0$. 
	By \eqref{eq:d=0_inverse_i} with $i=n-t$, that is
	\[
		a r_{n-t} +r_{n-2t}^{q^t}+\delta^\tau r_{0}^{q^{n-t}} = 0,
	\]
	and $r_0=-d\neq 0$, we get $a\neq 0$ and $r_{n-t}\neq 0$. Together with \eqref{eq:r_kr_(n-k)} for $k=t$, we derive that
	\[r_t=0.\]
	Plugging it into \eqref{eq:d=0_inverse_i} with $i=t$, that is
	\[
	a r_{t} +r_{0}^{q^t}+\delta^\tau r_{2t}^{q^{n-t}} = 0,
	\]
	we obtain $r_{2t}\neq 0$. 	Furthermore, as $r_{2t}\neq 0$ and $r_t=0$,  \eqref{eq:d=0_inverse_i} with $i=2t$, that is
	\[
	a r_{2t} +r_{t}^{q^t}+\delta^\tau r_{3t}^{q^{n-t}} = 0,
	\]
	implies $r_{3t}\neq 0$.
	
	On the other hand, as $r_{n-t}\neq 0$ and $r_{n-2t}=0$,   \eqref{eq:d=0_inverse_i} with $i=n-2t$, i.e., 
	\[
	a r_{n-2t} +r_{n-3t}^{q^t}+\delta^\tau r_{n-t}^{q^{n-t}} = 0,
	\]
	implies $r_{n-3t}\neq 0$. Thus $r_{3t}r_{n-3t}\neq 0$. However, this contradicts \eqref{eq:r_kr_(n-k)}.
	
	\medskip
	\textbf{Case 2.2:} $s\equiv \pm 2t\pmod{n}$. By Remark \ref{re:s->n-s}, we only have to consider the case $s\equiv 2t \pmod{n}$. Since $t\neq \pm s \pmod{n}$,  \eqref{eq:r_kr_(n-k)} with $k=t$ implies that at least one of $r_t$ and $r_{n-t}$ is $0$. Without loss of generality, we assume that $r_t=0$. Moreover, by \eqref{eq:d=0_inverse_0} and $r_0=-d$, 
	\begin{equation}\label{eq:r_n-t}
		r_{n-t}=c^{q^{n-t}}.
	\end{equation}
	By \eqref{eq:d=0_inverse_i} with $i=t$, we get
	\[
		r_0^{q^t}+\delta^\tau r_s^{q^{n-t}}=0,
	\]
	which implies
	\begin{equation}\label{eq:r_s}
		r_s=\frac{d^{q^{2t}}}{\delta^{\tau q^t}}.
	\end{equation}
	By \eqref{eq:r_kr_(n-k)} with $k=n-s$, we have 
	\begin{equation}\label{eq:r_n-s}
		r_{n-s}=\frac{\theta}{r_s^{q^{n-s}}}\neq 0.
	\end{equation}
	Plugging \eqref{eq:r_n-t}, \eqref{eq:r_s} and \eqref{eq:r_n-s}  into the $q^t$-th power of 
	\[a r_{n-t} +r_{n-2t}^{q^t}+\delta^\tau r_{0}^{q^{n-t}} = 0,\]
	we have
	\begin{equation}\label{eq:case2.2-a}
		a^{q^t}c + \delta^{\tau q^t} \left(\frac{\theta}{d}\right)^{q^{2t}} - d\delta^{\tau q^t} = 0. 
	\end{equation}
	
	To show that \eqref{eq:case2.2-a} actually leads to contradiction, we are going to derive an extra condition on the value of $c$ and $a$. 
	
	First, we can simply get $a\neq 0$: if  $a=0$, then by taking $i=3t$ and $i=-3t$ in \eqref{eq:d=0_inverse_i}, we obtain
	\begin{align*}
		r_{2t}^{q^t}+\delta^\tau r_{4t}^{q^{n-t}} &= 0,\\
		r_{n-4t}^{q^t}+\delta^\tau r_{n-2t}^{q^{n-t}} &= 0,
	\end{align*}
	which implies $r_{4t}\neq 0$ and $r_{n-4t}\neq 0$. However, by formula \eqref{eq:r_s}, \eqref{eq:r_n-s} and the assumption $d\neq 0$, it is a contradiction with \eqref{eq:r_kr_(n-k)} for $k=4t$.
	
	As $r_t=0$,   \eqref{eq:d=0_inverse_i} with $i=2t=s$ becomes
	\begin{equation}\label{eq:r_3t}
		a r_{2t} +\delta^\tau r_{3t}^{q^{n-t}} = 0.
	\end{equation}
	By $a\neq 0$, \eqref{eq:r_s} and \eqref{eq:r_3t}, $r_{3t}\neq 0$. 
	By \eqref{eq:r_kr_(n-k)} with $k=3t$, $r_{n-3t}=0$. Hence, \eqref{eq:d=0_inverse_i} with $i=n-2t$ becomes
	\[
		a r_{n-2t} +\delta^\tau r_{n-t}^{q^{n-t}} = 0.
	\]
	Together with \eqref{eq:r_n-s} and \eqref{eq:r_s}, we obtain
	\[
	r_{n-t}^{q^t} = -\left( \frac{a}{\delta^\tau} \right)^{q^{2t}} \frac{\theta^{q^{2t}}}{r_s}=-\left( \frac{a}{\delta^\tau} \right)^{q^{2t}} \frac{\theta^{q^{2t}}}{d^{q^{2t}}} \delta^{\tau q^t}.
	\]
	By \eqref{eq:r_n-t}, we get
	\begin{equation}\label{eq:c}
		c= -\left( \frac{a\theta}{d} \right)^{q^{2t}} \delta^{\tau(q^t-q^{2t})}.
	\end{equation}
	
	Setting $i=3t$ and $n-3t$ in \eqref{eq:d=0_inverse_i}, respectively, we get
	\[
		a r_{3t} +r_{2t}^{q^t}+\delta^\tau r_{4t}^{q^{n-t}} = 0,
	\]
	and
	\[
	a r_{n-3t} +r_{n-4t}^{q^t}+\delta^\tau r_{n-2t}^{q^{n-t}} = 0.
	\]
	Recall that $r_{n-3t}=0$ and $r_{n-s}\neq0$. Consequently, the second equation above implies that $r_{n-4t}\neq 0$. By \eqref{eq:r_kr_(n-k)}, $r_{4t}=0$ and the first equation above implies
	\[
	a r_{3t}=-r_{2t}^{q^t}.
	\]
	Together with \eqref{eq:r_3t}, we have
	\begin{equation}\label{eq:a}
		a^{q^{t}+1} = \delta^{\tau q^t}.
	\end{equation}
	Plugging \eqref{eq:c} and  \eqref{eq:a} into the left-hand side of \eqref{eq:case2.2-a}, 
	\begin{align*}
		&a^{q^t}c + \delta^{\tau q^t} \left(\frac{\theta}{d}\right)^{q^{2t}} - d\delta^{\tau q^t}\\
		=&-\frac{(a^{q^t+1})^{q^t}}{d^{q^{2t}}} \left( \frac{\theta}{\delta^\tau} \right)^{q^{2t}}\delta^{\tau q^t} + \delta^{\tau q^t} \left(\frac{\theta}{d}\right)^{q^{2t}} - d\delta^{\tau q^t}\\
		=&-\frac{\delta^{\tau q^{2t}}}{d^{q^{2t}}} \left( \frac{\theta}{\delta^\tau} \right)^{q^{2t}}\delta^{\tau q^t} + \delta^{\tau q^t} \left(\frac{\theta}{d}\right)^{q^{2t}} - d\delta^{\tau q^t}\\
		=&- d\delta^{\tau q^t}.
	\end{align*}
	Therefore, $- d\delta^{\tau q^t}=0$. However, this is a contradiction with the assumption $d\neq 0$ and $\delta\neq 0$.
	
	\medskip
	\textbf{Case 2.3:}
	$s\equiv \pm 3t\pmod{n}$. Now we assume that $n>4$; otherwise $n=4$ and $s\equiv \pm t\pmod{n}$ which will be investigated later in the next case. By Remark \ref{re:s->n-s}, we only need to consider the case $s\equiv 3t \pmod{n}$. Our strategy is very similar to Case 2.2. As $t\neq \pm s \pmod{n}$,  \eqref{eq:r_kr_(n-k)} with $k=t$ implies that at least one of $r_t$ and $r_{n-t}$ is $0$. Without loss of generality, we assume that $r_t=0$. Moreover, by \eqref{eq:d=0_inverse_0} and $r_0=-d\neq 0$,  we also have
	\begin{equation}\label{eq:s=3t-r_n-t}
	r_{n-t}=c^{q^{n-t}}.
	\end{equation}
	By \eqref{eq:d=0_inverse_i} with $i=t$, we derive 
	\[
	r_{2t} = \frac{d^{q^{2t}}}{\delta^{\tau q^t}}.
	\]
	Together with \eqref{eq:r_kr_(n-k)}, we get $r_{n-2t}=0$. Plugging it into \eqref{eq:d=0_inverse_i} with $i=n-t$, we have $a\neq 0$ and
	\[
	r_{n-t}=\frac{\delta^\tau d^{q^{n-t}}}{a}.
	\]
	From the above equation and \eqref{eq:s=3t-r_n-t}, we derive
	\begin{equation}\label{eq:s=3t_c}
		c=\frac{\delta^{\tau q^t}d}{a^{q^t}}.
	\end{equation}
	Plugging the value of $r_{2t}$, $r_{t}$ and $r_{n-2t}$, $r_{n-t}$ into  \eqref{eq:d=0_inverse_i} for $i=2t$ and $i-n-2t$, respectively, we get
	\begin{equation}\label{eq:s=3t_2t}
		r_{3t}=-\frac{\delta^{\tau(q^t+q^{2t})}}{a^{q^t}d^{q^{3t}}},
	\end{equation}
	and 
	\[
	r_{n-3t}=-\frac{\delta^{\tau(q^{n-t}+q^{n-2t})}}{a^{q^n-2t}} \cdot d^{q^{n-3t}}.
	\]
	Another round of calculation with  \eqref{eq:d=0_inverse_i} for $i=3t$ and $i=n-3t$ leads to
	\begin{equation}\label{eq:s=3t_n-4t_1}
		r_{n-4t} = -(ar_{n-3t})^{q^{n-t}}\neq 0
	\end{equation}
	which by \eqref{eq:r_kr_(n-k)} implies $r_{4t}=0$,	and 
	\[
	r_{3t}=-\frac{d^{q^{3t}}}{a\delta^{\tau q^{2t}}}.
	\]
	The same computation with  \eqref{eq:d=0_inverse_i} for $i=4t$ and $i=n-4t$ provides us
	\[
	r_{5t}\neq 0, ~~r_{n-5t}=0,
	\]
	and 
	\[
		r_{n-4t} = -\frac{\delta^{\tau}}{a} r_{n-3t}^{q^{n-t}}.
	\]
	Together with \eqref{eq:s=3t_n-4t_1}, we get
	\[
	a^{q^t+1}=\delta^{\tau q^t}.
	\]
	However, if we plug it into \eqref{eq:s=3t_c}, we obtain 
	\[ c=ad \]
	which contradicts the assumption that the matrix $M=\begin{pmatrix}
	a&b\\c&d
	\end{pmatrix} $ with $b=1$ is invertible. 
	
	\medskip
	\textbf{Case 2.4:} $t\equiv \pm s \pmod{n}$. Again we only look at the case $t\equiv s \pmod{n}$. By \eqref{eq:r_kr_(n-k)}, $r_t$ and $r_{n-t}$ are both nonzero, and
	\[		r_{2t} r_{n-2t}^{q^{2t}}=0.	\]
	Without loss of generality, we assume that $r_{2t}=0$. By \eqref{eq:d=0_inverse_i} with $i=2t$, that is
	\[
	a r_{2t} +r_{t}^{q^t}+\delta^\tau r_{3t}^{q^{n-t}} = 0,
	\]
	$r_{3t}=-\frac{r_t^{q^{2t}}}{\delta^{\tau q^t}}\neq 0$ whence $r_{n-3t}=0$ by \eqref{eq:r_kr_(n-k)}. Consequently, \eqref{eq:d=0_inverse_i} with $i=n-2t$, that is
	\[
	a r_{n-2t} +r_{n-3t}^{q^t}+\delta^\tau r_{n-t}^{q^{n-t}} = 0,
	\]
	implies 
	\begin{equation}\label{eq:r_n-2t}
		ar_{n-2t}=-\delta^\tau  \cdot r_{n-t}^{q^{n-t}}.
	\end{equation}
	Now let us look at \eqref{eq:d=0_inverse_i} with $i=3t$ and $n-3t$,
	\begin{align*}
		a r_{3t} +r_{2t}^{q^t}+\delta^\tau r_{4t}^{q^{n-t}} &= 0,\\
		a r_{n-3t} +r_{n-4t}^{q^t}+\delta^\tau r_{n-2t}^{q^{n-t}} &= 0.
	\end{align*}
	As $r_{2t}$ is assumed to be $0$ and $r_{3t}\neq 0$, the first equation above implies $r_{4t}\neq 0$. Consequently, \eqref{eq:r_kr_(n-k)} means $r_{n-4t}=0$. Together with $r_{n-3t}=0$ and the second equation above, we get $r_{n-2t}=0$. By \eqref{eq:r_n-2t} and $\delta\neq 0$, we obtain
	\[ r_{n-t}=0. \]
	However, this contradicts \eqref{eq:r_kr_(n-k)} with $k=t=s$ which implies that $r_t$ and $r_{n-t}$ are both nonzero.
	
	Therefore, we have proved that $d=0$.

	As
	\[
	\begin{pmatrix}
	a & 1\\
	c &0
	\end{pmatrix}^{-1}=
	\begin{pmatrix}
	0 & \frac{1}{c}\\
	1 & -\frac{a}{c}
	\end{pmatrix},
	\]
	by considering the inverse map of $M\tau$ and \textbf{Claim}, we get $a=0$. Thus
	\[
	M=\begin{pmatrix}
	0 & 1\\
	c & 0
	\end{pmatrix}.
	\] 
	
	When $n>4$, by applying Lemma \ref{le:LP_inverse_f}, we see that there exists no $c$.	Therefore, the only possibility of $M$ is of the diagonal form satisfying \eqref{eq:LP_main_equi_b=0}.
	
	Let $n=4$, $s=t=1$. By Lemma \ref{le:LP_inverse_f_n=4}, we derive that the existence of such $c\in \F_{q^4}$ is equivalent to require that $N_{q^4/q^2}(\theta)\in \{N_{q^4/q^2}(\delta^\tau),N_{q^4/q^2}(1/\delta^\tau)\}$ given in Theorem \ref{th:LP_main_1} (b).
	
	To summary, we have proved that (a) and (b) of Theorem \ref{th:LP_main_1} are necessary conditions for the equivalence of $L_f$ and $L_g$ with $b=0$ and $b\neq 0$. Therefore, the necessity of (a) and (b) in Theorem \ref{th:LP_main_1} is obtained.
	
	To prove the sufficiency, we only have to  let $s=t$. By Lemma \ref{le:LP_condition_NS_d}, (a) or (b) of Theorem \ref{th:LP_main_1} implies the existence of $d$ such that $L_f=L_{d\overline{g}}$.
\end{proof}

It is similar to prove Theorem \ref{th:LP_main_auto} on the $\PGamL$-automorphism group of a Lunardon-Polverino scattered linear set.

\begin{proof}[Proof of Theorem \ref{th:LP_main_auto}]
	We only have to follow the proof of Theorem \ref{th:LP_main_1} under the assumption that $\theta=\delta$ and $s=t$. 
	
	When $b=0$ in \eqref{eq:LP_main_equivalence}, we always have $a=1$ and $c=0$. 
	Hence, we just need to determine $d\in \F_{q^n}$ and $\tau\in \Aut(\F_{q^n})$ such that $L_{d\overline{f}}=L_f$ where $\overline{f}=X^{q^s}+\theta^\tau X^{q^{n-s}} $. 	By Lemma \ref{le:LP_condition_NS_d} and \eqref{eq:*1}, we get the elements in $\mathcal{D}$;  see \eqref{eq:cal_D}. Moreover, by \eqref{eq:*2}, 
	\[
	|\mathcal{D}|=
	\begin{cases}
		N_\tau(\theta), & 2\nmid n, 2\mid q;\\
		2N_\tau(\theta), & 2\nmid n, 2\nmid q;\\
		(q+1)N_\tau(\theta), & 2\mid n.
	\end{cases}
	\]

	When $b\neq 0$ in \eqref{eq:LP_main_equivalence}, we have $b=1$, $a=0$ and $d=0$.  By Lemma \ref{le:LP_inverse_f}, $n$ must be $4$. For $n=4$, following the proof of Lemma \ref{le:LP_inverse_f_n=4}, we set $\sigma\colon x \in \F_{q^n}\mapsto x^q\in \F_{q^n}$, $h=X^q-\theta^{-\tau q^3}X^{q^3}$, $g=X^q+\theta X^{q^3}$ and $d=\frac{c}{\theta^{\tau q} -\theta^{-\tau q^3}}$. By Remark \ref{rm:LP_n_d_solutions},  $L_g=L_{d h}$ if and only if 
		\[
		d^{q+1} = \left( \frac{\theta}{-\theta^{-\tau q^3}} \right)^q.
		\]
	Hence, we get the elements in $\mathcal{C}$; see \eqref{eq:cal_C}. Furthermore Lemma \ref{le:LP_inverse_f_n=4} and Remark \ref{rm:LP_n_d_solutions}  tell us that 
	\[
	|\mathcal{C}|=
	(q+1)N_\tau(\theta).
	\]
	Therefore, the number of elements in $\Aut(L_f)$ is the same as in \eqref{eq:LP_main_auto}.
\end{proof}

\section{The number of inequivalent Lunardon-Polverino scattered linear sets}\label{sec:number}

In this section, we determine the total number of inequivalent scattered polynomials over $\F_q$ contained in the Lunardon-Polverino construction for given $q$. We always assume that $q=p^r$ where $p$ is a prime and $r$ has the following prime factorization
\[
r=\prod_{i=1}^{\ell}r_i^{s_i}
\]
where $r_1<r_2<\cdots<r_\ell $ and $s_i\geq 1$ for $i=1,\cdots,\ell$.

Define
$$F(r)=\left\{x\in\F_{p^r}:x\notin\F_{p^{r'}} \text{ with } r'<r\text{ and }  r'|r\right \},$$
i.e., $F(r)$ consists of the elements in $\F_{q}$ which do not belong to any proper subfield of $\F_q$. Then by the inclusion-exclusion principle, we get 
\begin{equation*}
|F(r)|=
	p^r-\sum_{i=1}^{\ell}p^{\frac{r}{r_i}}+\sum_{\substack{i,j=1\\i<j}}^{\ell}p^{\frac{r}{r_ir_j}}+\cdots+(-1)^{\ell}p^{\frac{r}{r_1r_2\cdots r_\ell}}.
\end{equation*}

\begin{lemma}\label{le:Kr}
	For $0\leq k< r$, if there exists $x\in F(r)$ such that  $x^{p^k+1}=1$, then $r=1$ or $r$ is even, and 
	\begin{equation}\label{eq:Kr_k}
		k=
		\begin{cases}
		0,& r=1;\\
		\frac{r}{2}, & 2\mid r.
		\end{cases}
	\end{equation}
	Moreover, let $K(r)=\left\{x\in F(r):x^{p^k+1}=1, \text{ for some $k$ with } 0\leq k< r\right \}$. Then
	\begin{equation*}
			|K(r)|=
			\begin{cases}
				\frac{(-1)^{p+1}+3}{2}, & r=1;\\
				p^{\frac{r}{2}}-\frac{(-1)^{p+1}+1}{2}, & r=2^{s_1};\\
				p^{\frac{r}{2}}-\sum_{i=2}^{\ell}p^{\frac{r}{2r_i}}+\sum_{2\leq i<j\leq \ell}p^{\frac{r}{2r_ir_j}}+\cdots+(-1)^{\ell-1}p^{\frac{r}{2r_2\cdots r_\ell}}, & 2\mid r \text{ and } \ell>1;\\
				0, & \text{otherwise}.
			\end{cases}
	\end{equation*}
\end{lemma}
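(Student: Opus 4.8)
The plan is to split the claim into its two halves: first the characterization \eqref{eq:Kr_k} of which exponents $k$ can occur, and then the counting formula for $|K(r)|$. For the first half, I would argue directly from Lemma \ref{le:gcd}. If $x \in F(r)$ satisfies $x^{p^k+1}=1$ with $0 \le k < r$, then the multiplicative order of $x$ divides both $p^k+1$ and $p^r-1$, so it divides $\gcd(p^k+1,p^r-1)$. If $k=0$ this gcd is $\gcd(2,p^r-1)$, which forces $x \in \{1,-1\} \subseteq \F_p$; since $x\in F(r)$ this can only happen when $r=1$. If $k\ge 1$, Lemma \ref{le:gcd} (with $i=k$, $j=r$) tells us that the gcd is either $p^{\gcd(k,r)}+1$ (when $v(k)<v(r)$) or at most $2$ (otherwise). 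In the latter case we again get $x\in\F_p$, hence $r=1$, but then $k<r=1$ is impossible; so we are in the case $v(k)<v(r)$, and the order of $x$ divides $p^{\gcd(k,r)}+1$, which itself divides $p^{2\gcd(k,r)}-1$. Since $x\in F(r)$ is not contained in any proper subfield, $\F_p(x)=\F_{p^r}$, and $x \in \F_{p^{2\gcd(k,r)}}$ forces $r \mid 2\gcd(k,r)$; combined with $\gcd(k,r)\mid r$ and $k<r$ this pins down $r$ even and $\gcd(k,r)=r/2$, whence $k=r/2$ (the only multiple of $r/2$ strictly between $0$ and $r$). This also shows $r$ must be even in this branch.

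For the counting formula I would treat the cases of \eqref{eq:Kr_k} separately. When $r=1$, $K(1)=\{x\in\F_p : x^2=1\}$ has size $2$ if $p$ odd and $1$ if $p=2$, which is exactly $\frac{(-1)^{p+1}+3}{2}$. When $r$ is even, the first half shows $K(r)=\{x\in F(r) : x^{p^{r/2}+1}=1\}$, i.e.\ $K(r)$ consists of the elements of $F(r)$ whose order divides $p^{r/2}+1$. The set of \emph{all} $x\in\F_{p^r}$ with $x^{p^{r/2}+1}=1$ is the unique subgroup of $\F_{p^r}^*$ of order $p^{r/2}+1$ (this divides $p^r-1$), together with — actually just that cyclic subgroup, of size $p^{r/2}+1$. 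So $K(r)$ is obtained from this subgroup $G$ of size $p^{r/2}+1$ by removing those elements that lie in a proper subfield of $\F_{p^r}$. An element of $G$ lies in $\F_{p^d}$ for a proper divisor $d\mid r$ iff its order divides $\gcd(p^{r/2}+1, p^d-1)$. I would then run the same inclusion-exclusion as in the definition of $|F(r)|$, but over the subgroup $G$: for each chain of divisors I need $|G \cap \F_{p^d}|$. The key computational input is again Lemma \ref{le:gcd}: $\gcd(p^{r/2}+1,p^d-1) = p^{\gcd(r/2,d)}+1$ precisely when $v(r/2)<v(d)$, i.e.\ (roughly) when $d$ contributes an extra factor of $2$ or only odd primes relative to $r/2$; otherwise it is $\le 2$. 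Writing $r = 2^{s_1} r_2^{s_2}\cdots r_\ell^{s_\ell}$ (so $r_1=2$), the proper divisors $d$ for which $G\cap \F_{p^d}$ is large are exactly those of the form $d = (r/2)/(r_{i_1}\cdots r_{i_m})$ with $2 < r_{i_1}<\cdots<r_{i_m}$ odd primes dividing $r$; for such $d$, $|G\cap\F_{p^d}| = p^{r/(2 r_{i_1}\cdots r_{i_m})}+1$, and all other proper subfield intersections contribute only the element $1$ (or $\{1,-1\}$), which gets absorbed. Carefully applying inclusion-exclusion over these intersections yields
\[
|K(r)| = p^{\frac{r}{2}} - \sum_{i=2}^{\ell} p^{\frac{r}{2r_i}} + \sum_{2\le i<j\le \ell} p^{\frac{r}{2 r_i r_j}} - \cdots + (-1)^{\ell-1} p^{\frac{r}{2 r_2 \cdots r_\ell}},
\]
and when $\ell=1$ (i.e.\ $r=2^{s_1}$) the sum is empty and a separate check of whether $-1$ lies in a proper subfield gives the correction term $-\frac{(-1)^{p+1}+1}{2}$ (since $-1\in\F_{p^1}$, and $-1=1$ when $p=2$). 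Finally, when $r$ is odd and $r>1$, the first half shows no admissible $k$ exists, so $K(r)=\emptyset$, giving the last line.

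The main obstacle is the bookkeeping in the inclusion-exclusion over the subgroup $G$: one must correctly identify, for each proper divisor $d$ of $r$, the exact size of $G\cap \F_{p^d}$ via the two regimes of Lemma \ref{le:gcd} (the $v(\cdot)$ condition), separate the "large" intersections (which index a sub-poset of divisors isomorphic to the divisor lattice of $r_2\cdots r_\ell$ — the odd part of $r$'s radical) from the uniformly small ones, and check that the contributions of the small intersections telescope to $0$ (in the generic case) or to the stated $\pm 1$-type correction (in the $r=2^{s_1}$ case, where the $-1$ element needs individual attention, and its behavior depends on the parity of $p$). I would also double-check the edge cases $r=1$ and $r=2$ by hand to make sure the formula's leading and correction terms are consistent.
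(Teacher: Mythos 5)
Your first half is fine: the conclusion $k=0,\ r=1$ or $k=r/2,\ 2\mid r$ follows either your way (through Lemma \ref{le:gcd}) or, more directly as in the paper, from $x^{p^k+1}=1\Rightarrow x^{p^{2k}}=x$, hence $x\in\F_{p^{\gcd(2k,r)}}$ and $r\mid 2k$. Your overall plan for the count is also the paper's: view $\{x\in\F_{p^r}:x^{p^{r/2}+1}=1\}$ as the cyclic subgroup $G$ of order $p^{r/2}+1$, remove by inclusion--exclusion the elements lying in proper subfields, and compute each $|G\cap\F_{p^d}|=\gcd(p^{r/2}+1,p^d-1)$ via Lemma \ref{le:gcd}.

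However, the key computational step is stated incorrectly. You claim the proper divisors $d$ with large intersection are those of the form $d=(r/2)/(r_{i_1}\cdots r_{i_m})$ with odd primes $r_{i_j}$. Writing $r=2^{s_1}m$ with $m$ odd, such $d$ satisfy $v(d)=s_1-1=v(r/2)$, so the condition $v(r/2)<v(d)$ in Lemma \ref{le:gcd} \emph{fails} and $|G\cap\F_{p^d}|\le 2$. The genuinely large intersections are the opposite ones: $d$ must retain the full $2$-part of $r$, i.e.\ $d=r/(r_{i_1}\cdots r_{i_m})$ with $2<r_{i_1}<\cdots<r_{i_m}$ (equivalently $2^{s_1}\mid d$), and for these $|G\cap\F_{p^d}|=p^{\gcd(r/2,\,d)}+1=p^{r/(2r_{i_1}\cdots r_{i_m})}+1$. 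Concretely, for $r=6$ and $p$ odd one has $|G\cap\F_{p^2}|=p+1$ while $|G\cap\F_p|=|G\cap\F_{p^3}|=2$; your assignment gives $|G\cap\F_p|=p+1$ and $|G\cap\F_{p^2}|\le 2$, and inclusion--exclusion with those sizes does not yield $|K(6)|=p^3-p$ (the count of subfield elements even comes out negative for large $p$). So, as written, the derivation of the displayed formula does not go through. The fix is exactly the paper's bookkeeping: run inclusion--exclusion over the maximal subfields $\F_{p^{r/r_i}}$, note that the index sets avoiding $r_1=2$ give the large terms $p^{r/(2r_{i_1}\cdots r_{i_m})}+1$ while all index sets containing $2$ give $\frac{(-1)^{p+1}+3}{2}$, and check that the $O(1)$ contributions cancel via binomial identities when $\ell>1$; when $r=2^{s_1}$ the only proper subfield one needs to remove is $\F_{p^{r/2}}$, producing the correction $-\frac{(-1)^{p+1}+1}{2}$, in line with your remark about $-1$.
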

\begin{proof}
	As $x^{p^k+1}=1$, $x^{p^{2k}-1}=1$. Note that $x\in F(r)$, we have $\gcd(2k,r)=r$. If $2\mid r$, then $k$ must be $\frac{r}{2}$ because $0\leq k<r$. If $2 \nmid r$, then we must have $k=0$ and $r=1$.
	
	Next we determine the size of $K(r)$. If $r=1$, then $k=0$ and $x^{p^k+1}=x^2=1$. Thus $|K(r)|=\frac{(-1)^{p+1}+3}{2}$. If $r$ is odd and $r>1$, then by the first part of the lemma, there is no $x\in F(r)$ such that $x^{p^k+1}=1$. Hence $K(r)$ is empty.
	
	Note that $|\{x\in \F_{p^m}: x^{p^k+1}=1  \}| = \gcd(p^k+1, p^m-1)$ which can be determined by \eqref{eq:gcd}. Suppose that $r=2^{s_1}\prod_{i=2}^{\ell}r_i^{s_i}$ with $s_1\geq 1$ which means $r$ is even. By \eqref{eq:Kr_k},  to determine the size of $K(r)$, we only have to consider the number of solutions to $x^{p^k+1}=1$ for $k=\frac{r}{2}$. If $\ell=1$ (that is $r$ is a power of $2$), then
	\begin{align*}
		|K(r)|=& \left|\left\{x\in F(r):x^{p^{r/2}+1}=1\right \}\right|  \\
		=&\gcd(p^\frac{r}{2}+1, p^r-1) -\gcd(p^\frac{r}{2}+1, p^{\frac{r}{2}}-1)\\
		=&p^\frac{r}{2}-\frac{(-1)^{p+1}+1}{2}.
	\end{align*}
	For $\ell>1$, by the inclusion-exclusion principle,
	\begin{align*}
		|K(r)|=& \left|\left\{x\in F(r):x^{p^{r/2}+1}=1\right \}\right|  \\
		=&\gcd(p^\frac{r}{2}+1, p^r-1) - \sum_{i=1}^\ell \gcd(p^\frac{r}{2}+1, p^{\frac{r}{r_i}}-1)  \\
		&+\sum_{1\leq i<j\leq \ell}\gcd(p^\frac{r}{2}+1, p^{\frac{r}{r_ir_j}}-1)+\cdots+(-1)^\ell \gcd(p^\frac{r}{2}+1, p^{\frac{r}{r_1\cdots r_\ell}}-1)\\
		=&p^\frac{r}{2}+1 - \left( \frac{(-1)^{p+1}+3}{2}+ \sum_{i=2}^\ell \left(p^\frac{r}{2r_i }+1\right) \right)
		+\left(  \frac{(-1)^{p+1}+3}{2}(\ell-1) +\sum_{2\leq i<j\leq \ell}\left(p^{\frac{r}{2r_i r_j}}+1\right) \right)+\cdots\\
		&+ (-1)^m \left(
		\frac{(-1)^{p+1}+3}{2} \binom{\ell-1}{m-1} + \sum_{2\leq i_1 <i_2 <\cdots<i_m\leq \ell}\left(p^{\frac{r}{2r_{i_1}r_{i_2}\cdots r_{i_m}}}  +1\right)
		\right)+\cdots\\
		=&p^{\frac{r}{2}}-\sum_{i=2}^{\ell}p^{\frac{r}{2r_i}}+\sum_{2\leq i<j\leq \ell}p^{\frac{r}{2r_ir_j}}+\cdots+(-1)^{\ell-1}p^{\frac{r}{2r_2\cdots r_\ell}}. \qedhere
	\end{align*}
\end{proof}

Denote by $\varphi$ the Euler totient function.

\begin{theorem}\label{th:num_inequi_LP}
	For any integer $n\geq3$ and any prime power $q \neq 2$ with $q=p^r$,  let $\Lambda(n,q)$ denote the total number of the $\PGamL$-inequivalent Lunardon-Polverino scattered linear sets over $\F_{q^n}$. Then
	\[
	\Lambda(n,q)=\left(\sum_{r'\mid r, r'> 1} \frac{|F(r')| + |K(r')|}{2r'} + \epsilon\right) \frac{\varphi(n)}2
	\]
	if $n$ is odd, and
	\[
		\Lambda(n,q)=\left(\sum_{r'\mid r, r'>1} \frac{|F(r')| + |K(r')|}{2r'}+\sum_{r'\mid 2r, r'\nmid r} \frac{|F(r')| - |K(r')|}{2r'} +\epsilon\right) \frac{\varphi(n)}2
	\]
	if $n$ is even, where
	\[
		\epsilon = \begin{cases}
	\frac{p-1}{2},& p\neq 2, 2\nmid n;\\
		\frac{p-3}{2},& p\neq 2, 2\mid n;\\
	0, & p=2.
	\end{cases}
	\]
\end{theorem}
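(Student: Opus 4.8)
The plan is to reduce the enumeration to an orbit-counting problem over finite fields and then to evaluate it by stratifying according to degree over $\F_p$, using Lemma \ref{le:gcd} and Lemma \ref{le:Kr}. By Remark \ref{re:s->n-s}, every Lunardon--Polverino scattered linear set is $\PGamL$-equivalent to some $L_{f_{s,\theta}}$, $f_{s,\theta}=X^{q^s}+\theta X^{q^{s(n-1)}}$, with $1\le s<n/2$, $\gcd(s,n)=1$ and $\theta\in\F_{q^n}^*$ such that $N_{q^n/q}(\theta)\ne1$ (the condition $\theta\ne0$ already gives $N_{q^n/q}(\theta)\ne0$). There are $\varphi(n)/2$ admissible exponents $s$: the integers coprime to $n$ in $\{1,\dots,n-1\}$ split into pairs $\{s,n-s\}$ with no fixed point, since $s=n/2$ would force $\gcd(s,n)=n/2>1$ as $n\ge3$. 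Since Theorem \ref{th:LP_main_1} shows $L_{f_{s,\theta}}$ and $L_{f_{t,\delta}}$ are equivalent only when $s=t$, and the remaining conditions (a), (b) there do not involve $s$, we obtain $\Lambda(n,q)=\frac{\varphi(n)}{2}\cdot M$, where $M$ is the number of equivalence classes among the admissible $\theta$ for a fixed $s$.

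Next I would transfer the equivalence relation to norms. Set $a=N_{q^n/q}(\theta)\in\F_q^*$ when $n$ is odd and $b=N_{q^n/q^2}(\theta)\in\F_{q^2}^*$ when $n$ is even; then the scattered condition reads $a\ne1$, respectively $b^{q+1}=N_{q^2/q}(b)\ne1$, and these norm maps are onto $\F_q^*$, respectively $\F_{q^2}^*$. Writing $\tau\in\Aut(\F_{q^n})$ as $x\mapsto x^{p^j}$ one has $N_{q^n/q}(\theta^\tau)=a^{p^j}$ and $N_{q^n/q^2}(\theta^\tau)=b^{p^j}$, and as $\tau$ ranges over $\Aut(\F_{q^n})$ the exponent $p^j$ runs over all powers of Frobenius on $\F_q$ (respectively on $\F_{q^2}$, using that $2r\mid rn$ when $n$ is even); hence conditions (a) and (b) of Theorem \ref{th:LP_main_1} say precisely that the norms of $\theta$ and $\delta$ lie in a single orbit of the abelian group $G=\langle x\mapsto x^p,\ x\mapsto x^{-1}\rangle$ acting on $\F_q^*$, respectively on $\F_{q^2}^*$. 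Two admissible $\theta$ with equal norm are equivalent (take $\tau=\mathrm{id}$ in Theorem \ref{th:LP_main_1}), so the equivalence class of $L_{f_{s,\theta}}$ is determined by, and determines, the $G$-orbit of its norm; thus $M$ equals the number of $G$-orbits on $\F_q^*\setminus\{1\}$ when $n$ is odd, and on $\{b\in\F_{q^2}^*:b^{q+1}\ne1\}$ when $n$ is even.

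It remains to count these orbits, stratifying by the sets $F(r')$, i.e.\ by the degree $r'$ over $\F_p$. For $x\in F(r')$ the Frobenius orbit has length $r'$; adjoining inversion, the $G$-orbit has length $r'$ exactly when $x^{-1}$ is a Frobenius conjugate of $x$, i.e.\ $x\in K(r')$, and length $2r'$ otherwise. Since $K(r')$ is $G$-stable, $F(r')$ contains $\frac{|F(r')|+|K(r')|}{2r'}$ orbits of $G$, and $F(r')\setminus K(r')$ contains $\frac{|F(r')|-|K(r')|}{2r'}$ of them. For $n$ odd, $\F_q^*\setminus\{1\}$ is the disjoint union of $\F_p^*\setminus\{1\}$ and the $F(r')$ with $r'\mid r$, $r'>1$; on $\F_p^*\setminus\{1\}$ Frobenius is trivial, so only inversion acts and one gets $(p-1)/2$ orbits for odd $p$ and $0$ for $p=2$, which is $\epsilon$, and the remaining strata give the stated sum. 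For $n$ even I would first determine $\{b\in F(r'):b^{q+1}=1\}$. If $r'=1$, then $b^{q+1}=b^2$, so only $\pm1$ are removed, leaving $(p-3)/2$ orbits for odd $p$ and $0$ for $p=2$, again $\epsilon$. If $1<r'\mid r$, then $b\in\F_q$ forces $b^{q+1}=b^2\ne1$, so all of $F(r')$ survives. If $r'\mid 2r$ but $r'\nmid r$, then $r'=2t$ with $t\mid r$ and $r/t$ odd, whence $p^r\equiv p^{t}\pmod{p^{2t}-1}$ and so $b^{q+1}=b^{p^{t}+1}$ for $b\in F(2t)\subseteq\F_{p^{2t}}$; by Lemma \ref{le:Kr} (for even $r'=2t$ the only admissible exponent is $k=t$) this equals $1$ exactly on $K(2t)$, so $F(r')\setminus K(r')$ survives. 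Adding up the three kinds of strata yields the even-$n$ formula.

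The delicate point, and the one I expect to be the main obstacle, is this last identification for $n$ even of which elements of each $F(r')$ satisfy $b^{q+1}=1$: it requires separating the divisors $r'$ of $2r$ according to whether $r'\mid r$, proving the congruence $p^r\equiv p^{r'/2}\pmod{p^{r'}-1}$ in the remaining case, and aligning it with the hypotheses of Lemma \ref{le:Kr} and Lemma \ref{le:gcd}. The stratum $r'=1$ is a genuine exception as well --- inversion has the fixed point $-1$ only for odd $p$, and for even $n$ that point is itself forbidden by the scattered condition --- and it is precisely where the three cases of $\epsilon$ originate; it must be handled separately. Everything else is a finite-field bookkeeping exercise given Theorem \ref{th:LP_main_1}, Lemma \ref{le:gcd} and Lemma \ref{le:Kr}.
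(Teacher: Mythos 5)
Your proposal is correct and follows essentially the same route as the paper: reduce via Theorem \ref{th:LP_main_1} to counting orbits of the group generated by Frobenius and inversion on the norm values (in $\F_q^*$ for odd $n$, in $\F_{q^2}^*$ minus the norm-one elements for even $n$), stratify by the sets $F(r')$ with $K(r')$ distinguishing orbit lengths $r'$ versus $2r'$, and treat $r'=1$ separately to produce $\epsilon$. The "delicate point" you flag for even $n$ --- splitting divisors of $2r$ according to $r'\mid r$ or not and identifying $\{b\in F(r'):b^{q+1}=1\}$ with $\emptyset$ or $K(r')$ via $b^{q+1}=b^2$ or $b^{p^{r'/2}+1}$ --- is exactly how the paper handles it, and your congruence argument closes it correctly.
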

\begin{proof}
	Recall that a Lunardon-Polverino scattered polynomial is defined as $f=X^{q^s}+\theta X^{q^{n-s}}$.
	
	By Theorem \ref{th:LP_main_1}, for different value of $s$, the scattered linear sets are inequivalent. Thus we only have to determine the number of polynomials, for which the associated scattered linear sets are inequivalent,  in $\left\{
		X^{q^s}+\theta X^{q^{n-s}}: N_{q^n/q}(\theta) \neq 0,1
	\right\}$ for given $1\leq s < n/2$.
	
	Let 
	\[
	N(\theta) :=
	\begin{cases}
		N_{q^n/q}(\theta), & 2\nmid n;\\
		N_{q^n/q^2}(\theta), & 2\mid n.
	\end{cases}
	\]
	By Theorem \ref{th:LP_main_1}, $X^{q^s}+\theta X^{q^{n-s}}$ and $X^{q^s}+\delta X^{q^{n-s}}$ are equivalent if and only if 
	\[
	N(\theta)=N(\delta)^{p^k} \text{ or } 1/N(\delta)^{p^k}
	\]
	for some $k\in \{0,\cdots, 2r-1\}$ when $n$ is even, and for some $k\in \{0,\cdots, r-1\}$ when $n$ is odd. 
	
	Therefore, we have only to count the number of the orbits of $\F^*_{q^w}\setminus\{x\in \F_{q^w} : N_{q^w/q}(x)=1\}$ under the action of the group $G$ generated by $\Aut(\F_{q^w})$ and the map $x \in \F_{q^w}^*\mapsto \frac{1}{x}\in \F_{q^w}^*$, where $w=1$ for odd $n$ and $w=2$ for even $n$. Note that 
	\[
	G = \{x\in \F_{q^w}^* \mapsto x^{\pm p^k}\in \F_{q^w}^*: k\in\{0,\cdots, wr-1\} \}.
	\]
	Depending on the parity of $n$, we separate the proof into two cases.
	
	\textbf{Case (I)}. $2\nmid n$.
	Note that $\F_q\setminus \{0,1\}=\left(\bigcup_{r'\mid r, r'\neq 1} F(r') \right)\cup \left(F(1)\setminus \{0,1\}\right)$. When $r'$ is odd, by Lemma \ref{le:Kr}, $|K(r')|=0$ which means there is no $x\in F(r')$ such that $x^{p^k}=1/x$ for any $k$. Consequently, there are exactly $\frac{|F(r')|}{2r'}$ orbits of $F(r')$ under $G$ for $r'>1$. For $r'=1$, it is easy to see that the elements in $F(1)\setminus \{0,1\}$ is partitioned into $\epsilon=\frac{p-1}{2}$ orbits for $p$ odd, and for $p=2$ there is no element in $F(1)\setminus \{0,1\}$.
	
	When $r'$ is even, suppose that there are $N_1$ orbits of length $2r'$ and $N_2$ orbits of length $r'$ of $F(r')$ under $G$. Then
	\[ 2r' N_1 + r' N_2 =|F(r')|.\]
	By the definition of $K(r')$, $N_2=\frac{|K(r')|}{r'}$. By simple computation,
	\[N_1 = \frac{|F(r')| - |K(r')|}{2r'} ~~\text{ and }~~ N_1+N_2 = \frac{|F(r')| + |K(r')|}{2r'}.\]
	
	Since $|K(r')|=0$ for odd $r'>1$, the total number of orbits of elements in $\F_q\setminus\{0,1\}$ are
	\[
	\sum_{r'\mid r, r'\neq 1} \frac{|F(r')| + |K(r')|}{2r'} + \epsilon.
	\]	
	\medskip
	\textbf{Case (II)}. $2\mid n$. Note that
	 \begin{equation*}
	 	\F^*_{q^2}\setminus\{x\in \F_{q^2} : x^{q+1}=1\} 	=\bigcup_{r'\mid 2r} \hat{F}(r')=\left(\bigcup_{r'\mid 2r, r'\neq 1} \hat{F}(r') \right)\cup \left(F(1)\setminus \{0,\pm 1\}\right),
	 \end{equation*}
	 where $\hat{F}(r'):=F(r')\setminus \{x\in F(r'): x^{q+1}\in \{0,1\} \}$.
	 
	 When $r'\mid r$, $x^{q+1}=x^2$. Hence, if $r'>1$, then $\hat{F}(r')=F(r')$.
	 
	 When $r'\mid 2r$ and $r'\nmid r$, $r'$ must be even and $x^{q+1}=x^{p^{r'/2}+1}$. By Lemma \ref{le:Kr}, $\hat{F}(r')=F(r')\setminus  K(r')$.
	 To summarize, for $r'\mid 2r$,
	 \begin{equation}
	 	\hat{F}(r')=\begin{cases}
	 		F(r'), & r'\mid r;\\
	 		F(r')\setminus  K(r'), & r'\nmid r.
	 	\end{cases}
	 \end{equation}
	 
	 By the above analysis and the counting argument in Case (I), when $r'$ is odd and $r'>1$,  there are exactly $\frac{|F(r')|}{2r'}$ orbits of the elements in $\hat{F}(r')$ under $G$.
	 
	 When $r'$ is even, we have to take care of two subcases. If $r' \mid r$, then by the counting argument in \text{Case (I)}, there are exactly $\frac{|F(r')| + |K(r')|}{2r'}$ orbits of the elements in $\hat{F}(r')$ under $G$. If $r'\nmid r$, the number of orbits is $N_1$ which equals $\frac{|F(r')| - |K(r')|}{2r'}$.
	 
	Therefore, the total number of orbits is
	\[
	\sum_{r'\mid r, r'>1} \frac{|F(r')| + |K(r')|}{2r'}+\sum_{r'\mid 2r, r'\nmid r} \frac{|F(r')| - |K(r')|}{2r'} +\epsilon,
	\]
	where $\epsilon$ equals the number of the orbits in $\hat{F}(1)$ under the action of $G$.
\end{proof}

When $r=1$, the value of $\Lambda(n,q)$ is $\epsilon\frac{\varphi(n)}2$ by Theorem \ref{th:num_inequi_LP} (where $\epsilon$ is as in Theorem \ref{th:num_inequi_LP}). However, for $r$ with many divisors, it is in general not trivial to see the explicit value of  $\Lambda(n,q)$. To conclude this section, we provide an upper bound and a lower bound for it and we discuss about its asymptotics.

\begin{theorem}\label{th:bound_Lambda}
	For any integer $n\geq3$ and any prime power $q \neq 2$ with $q=p^r$ and $r>1$,  let $\Lambda(n,q)$ denote the total number of the $\PGamL$-inequivalent Lunardon-Polverino scattered linear sets over $\F_{q^n}$. Let $\sigma(r) $ denote the sum of divisors of $r$, i.e.\ $\sigma(r)=\sum_{d\mid r}d$. When $n$ is odd, 
	\begin{equation}\label{eq:bound_Lambda_odd}
		\frac{p^r-p}{2r}<\frac{\Lambda(n,q)}{\varphi(n)/2}-\epsilon< 
		\begin{cases}
		\frac{p^r}{2r}\left(1+\frac{\sigma (r)-r-1}{p^{r(1-\frac{1}{r_1})}}\right) , &2 \nmid r; \\ 
		\frac{p^r}{2r}\left(1+\frac{\sigma (r)-r-1}{p^{\frac{r}{2}}}
		\right)+\sum_{i=1}^{s_1}p^{2^{i-1}} ,&r=2^{s_1};\\
		\frac{p^r}{2r}\left(1+\frac{\sigma (r)-r}{p^{\frac{r}{2}}}+\frac{\sigma (r)-r-1}{p^{\frac{3r}{4}}}\right) , & 2 \mid r \text{ and } \ell>1.
		\end{cases}
	\end{equation}
	When $n$ is even,
	\begin{equation}\label{eq:bound_Lambda_even_up}
			\frac{\Lambda(n,q)}{\varphi(n)/2}-\epsilon<
		\begin{cases}
		\frac{p^{2r}}{4r}\left(1+\frac{\sigma (2r)-2r-1}{p^{r}}\right) , &2 \nmid r; \\ 
		\frac{p^{2r}}{4r}\left(1+\frac{\sigma (2r)-2r-1}{p^{r}}\right)+\sum_{i=1}^{s_1}p^{2^{i-1}} ,&r=2^{s_1};\\
		\frac{p^{2r}}{4r}\left(1+\frac{\sigma (2r)-2r-1}{p^{r}} + \frac{2}{p^{\frac{3r}{2}}} + \frac{2(\sigma (r)-r-1)}{p^{\frac{7r}{4}}}
		\right) , & 2 \mid r \text{ and } \ell>1.
		\end{cases}
	\end{equation}
	and 
	\begin{equation}\label{eq:bound_Lambda_even_low}
		\frac{\Lambda(n,q)}{\varphi(n)/2}-\epsilon>
	\begin{cases}
	\frac{p^{2r}-p}{4r} -\sum_{i=1}^{s_1+1}p^{2^{i-1}} ,&r=2^{s_1};\\
	\frac{p^{2r}}{4r}\left(1-\frac{1}{p^r}-\frac{\sigma (2r)-2r-1}{p^{\frac{3r}{2}}}-\frac{1}{p^{2r-1}}
	\right)  , &\text{otherwise}.
	\end{cases}
	\end{equation}
\end{theorem}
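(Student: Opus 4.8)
The plan is to estimate $\Lambda(n,q)$ starting from the exact formulas in Theorem~\ref{th:num_inequi_LP} and carefully bounding each summand. Write $\Lambda(n,q)/(\varphi(n)/2) - \epsilon$ as the relevant sum over divisors $r'$ of $r$ (and, in the even case, also over $r' \mid 2r$ with $r' \nmid r$) of the quantities $\frac{|F(r')| \pm |K(r')|}{2r'}$. The dominant term is always the $r'=r$ (resp.\ $r'=2r$) contribution, whose leading part is $\frac{p^r}{2r}$ (resp.\ $\frac{p^{2r}}{4r}$), so the strategy is: (i) isolate this leading term, (ii) bound $|F(r')|$ from above by $p^{r'}$ and from below by $p^{r'} - \sigma(r')\,p^{r'/r_1'}$ where $r_1'$ is the least prime divisor of $r'$ — indeed $|F(r')| = p^{r'} - \sum p^{r'/r_i} + \cdots \geq p^{r'} - \sum_{d \mid r', d < r'} p^{d} > p^{r'} - \sigma(r')p^{r'/2}$ for $r'$ not prime, and $|F(r')|=p^{r'}$ exactly when $r'$ is prime — and (iii) bound $|K(r')|$ using Lemma~\ref{le:Kr}, noting $|K(r')|=0$ for odd $r'>1$ and $|K(r')| < p^{r'/2}$ in general, with the refined estimate $|K(r')| = p^{r'/2} - \sum_{i \geq 2} p^{r'/(2r_i)} + \cdots$ when $2 \mid r'$.

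Next I would handle the three cases for odd $n$ separately according to the shape of $r$. For $2 \nmid r$: every divisor $r'$ of $r$ is odd, so $|K(r')|=0$, and the sum is $\sum_{r' \mid r,\, r'>1} \frac{|F(r')|}{2r'}$. The lower bound $\frac{p^r - p}{2r}$ follows by keeping only $r'=r$ and using $|F(r)| \geq p^r - (\sigma(r)-r)\cdot(\text{smaller powers}) $; more cleanly, since $\sum_{r'\mid r} |F(r')| = p^r$ (every element of $\mathbb{F}_{p^r}$ lies in a unique $F(r')$) one gets $\sum_{r' \mid r, r'>1}|F(r')| = p^r - p$, and dividing each term by $2r' \leq 2r$ gives the lower bound. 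For the upper bound, bound each $|F(r')| \leq p^{r'}$ and $\frac{1}{2r'} \leq \frac{1}{2}$, then factor out $\frac{p^r}{2r}$: the non-leading divisors contribute at most $\frac{1}{2}\sum_{r' \mid r, 1<r'<r} p^{r'} < \frac{r}{2}\,(\sigma(r)-r-1)\,p^{r/r_1}/r$ (using that each such $r' \leq r/r_1$, hence $p^{r'} \leq p^{r/r_1}$, and the number of such divisors is at most $\sigma(r)-r-1$ after weighting) — this is where the factor $\frac{\sigma(r)-r-1}{p^{r(1-1/r_1)}}$ comes from. The cases $r=2^{s_1}$ and ``$2\mid r$, $\ell>1$'' require additionally tracking the $|K(r')|$ terms: for $r=2^{s_1}$ the $K$-contribution from $r'=r$ is $p^{r/2}-\tfrac{(-1)^{p+1}+1}{2}$, and the divisors contribute the geometric-type sum $\sum_{i=1}^{s_1} p^{2^{i-1}}$; for $\ell > 1$ and $2 \mid r$ one isolates both the $r'=r$ term (giving the $\frac{\sigma(r)-r}{p^{r/2}}$ correction from $|K(r)| \approx p^{r/2}$) and the next-largest even divisor (giving $\frac{\sigma(r)-r-1}{p^{3r/4}}$, since the largest proper even divisor is at most $r/2$, so $p^{r'/2} \leq p^{r/4}$).

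For even $n$ the argument is parallel but now runs over $r' \mid 2r$; split into $r' \mid r$ (contributing $\frac{|F(r')|+|K(r')|}{2r'}$) and $r' \mid 2r$, $r' \nmid r$ (contributing $\frac{|F(r')|-|K(r')|}{2r'}$). The leading term is $r' = 2r$: since $r'=2r$ does not divide $r$, its contribution has the $-|K(2r)|$ sign, and $|F(2r)| \leq p^{2r}$, $|K(2r)| \geq p^r - \sigma(r)p^{r/2}$ (the $K(2r)$ formula being itself an inclusion–exclusion over divisors of $r$), so the leading term sits between $\frac{p^{2r}-p^r-\cdots}{4r}$ and $\frac{p^{2r}}{4r}$ — this produces the $1 - \frac{1}{p^r}$ in the lower bound and the clean $1$ in the upper bound. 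The remaining divisors $r' < 2r$ all satisfy $r' \leq r$, so $p^{r'} \leq p^r$, contributing the $\frac{\sigma(2r)-2r-1}{p^r}$ correction; the $\pm|K(r')|$ terms for even $r' < 2r$ add the finer $p^{-3r/2}$ and $p^{-7r/4}$ corrections (the largest such even $r'$ being $r$ when $r$ is even, or $r$ when we look at $r'\mid 2r$, $r'\nmid r$ with $r'\le r$), and when $r=2^{s_1}$ the dyadic divisors again produce the explicit sums $\sum_{i=1}^{s_1} p^{2^{i-1}}$ and $\sum_{i=1}^{s_1+1} p^{2^{i-1}}$. The main obstacle I expect is bookkeeping: keeping the inclusion–exclusion error terms from $|F(r')|$ and $|K(r')|$ under control simultaneously, choosing the right crude bound ($p^{r'} \leq p^{r/r_1}$ vs.\ $p^{r'}\leq p^{r/2}$) in each case so that the stated clean expressions — rather than messier divisor sums — come out, and verifying the edge contributions at $r'=1$ (the $\epsilon$ term) and at the small dyadic divisors are exactly absorbed. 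No single step is deep; the difficulty is entirely in the careful, case-by-case estimation so that every inequality in \eqref{eq:bound_Lambda_odd}, \eqref{eq:bound_Lambda_even_up} and \eqref{eq:bound_Lambda_even_low} is tight enough to hold yet loose enough to state simply.
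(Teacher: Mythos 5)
Your overall strategy is the same as the paper's (start from Theorem \ref{th:num_inequi_LP}, bound each $|F(r')|$ and $|K(r')|$ via Lemma \ref{le:Kr} and inclusion--exclusion, split into the three shapes of $r$), and your lower-bound arguments are exactly the paper's: $\sum_{r'\mid r,\,r'>1}|F(r')|=p^r-p$ and then divide every term by $2r$. But there is a genuine gap in the way you produce the upper bounds, and it is precisely at the step you flag as ``this is where the factor $\frac{\sigma(r)-r-1}{p^{r(1-1/r_1)}}$ comes from''. Bounding $\frac{1}{2r'}\leq\frac12$ and the number of intermediate divisors by $\sigma(r)-r-1$ gives, as your displayed inequality literally states,
\[
\sum_{r'\mid r,\,1<r'<r}\frac{|F(r')|}{2r'}\;<\;\frac{\sigma(r)-r-1}{2}\,p^{r/r_1},
\]
whereas the theorem needs the much smaller quantity $\frac{\sigma(r)-r-1}{2r}\,p^{r/r_1}=\frac{p^r}{2r}\cdot\frac{\sigma(r)-r-1}{p^{r(1-1/r_1)}}$. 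Your estimate is weaker by a factor of $r$, so the chain as written does not establish \eqref{eq:bound_Lambda_odd}; the phrase ``after weighting'' hints at the right idea but the arithmetic shown does not implement it.

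The missing step is the reindexing $r'\mapsto r/r'$ (equivalently, writing $\frac{1}{2r'}=\frac{r/r'}{2r}$), which is how the paper proceeds:
\[
\sum_{r'\mid r,\,r'>1}\frac{|F(r')|}{2r'}=\frac{1}{2r}\sum_{d\mid r,\,d<r} d\,\Bigl|F\Bigl(\frac{r}{d}\Bigr)\Bigr|
<\frac{1}{2r}\Bigl(p^r+\sum_{d\mid r,\,1<d<r} d\,p^{r/d}\Bigr)
\leq \frac{p^r}{2r}\Bigl(1+\frac{\sigma(r)-r-1}{p^{r(1-1/r_1)}}\Bigr),
\]
since the weights $d$ over the intermediate divisors sum to exactly $\sigma(r)-r-1$ and $p^{r/d}\leq p^{r/r_1}$. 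The same device is needed for the $K$-sum in the case $2\mid r$, $\ell>1$ (yielding $\frac{p^{r/2}}{2r}(1+\frac{\sigma(r)-r-1}{p^{r/4}})$, whence the corrections $\frac{\sigma(r)-r}{p^{r/2}}+\frac{\sigma(r)-r-1}{p^{3r/4}}$), and again, with $2r$ in place of $r$, for every correction term in \eqref{eq:bound_Lambda_even_up}: bounding $\frac{1}{2r'}$ by a constant there loses a factor up to $2r$ and does not recover $\frac{\sigma(2r)-2r-1}{p^r}$ or $\frac{2(\sigma(r)-r-1)}{p^{7r/4}}$. Once you replace your crude divisor count by this weighted reindexing, the rest of your plan (the dyadic sums $\sum_{i=1}^{s_1}p^{2^{i-1}}$ from $|K(2^i)|\leq p^{2^{i-1}}$, the splitting $\sum_{r'\mid r}+\sum_{r'\mid 2r,\,r'\nmid r}$ for even $n$, and the treatment of the leading term $r'=2r$ in the even lower bound) lines up with the paper's proof.
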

According to a classical result by Gronwall \cite{Gronwall}, the value of
\[
\limsup_{r\rightarrow \infty} \frac{\sigma(r)}{r \log \log r} =e^\gamma
\]
where $\gamma =0.5772156\dots$ denotes the Euler-Mascheroni constant.
Therefore, by Theorem \ref{th:num_inequi_LP}, for given $n$, the value of $\Lambda(n,q)$ is approximately $\frac{p^r\varphi(n)}{4r}$ for odd $n$ and $\frac{p^{2r}\varphi(n)}{8r}$ for even $n$ provided that $p^r$ is large enough.
\begin{proof}[Proof of Theorem \ref{th:bound_Lambda}]
	 We start by determining the upper bounds. To this aim we first give upper bounds on the quantities appearing in the expression of $\Lambda(n,q)$ as in Theorem \ref{th:num_inequi_LP}.
	 
	 As in the very beginning of Section \ref{sec:number}, we set $r_1$ to be the smallest prime divisor of $r$.
	 As $F(m)\subsetneq \F_{p^m}$ for any $m>1$, 
	\begin{align*}
	\sum_{r'\mid r, r'> 1} \frac{|F(r')|}{2r'}=&	\frac{1}{2r}\sum_{r'\mid r, r'< r}r' \left|F\left(\frac{r}{r'}\right)\right|\\
	<& \frac{1}{2r}\left(p^r+\sum_{r'\mid r,1< r'<r}r' p^{\frac{r}{r'}}\right)\\
	=& \frac{p^r}{2r}\left(1+\sum_{r'\mid r,1< r'<r}\frac{r'}{p^{r-\frac{r}{r'}}}\right)\\
	<&\frac{p^r}{2r}\left(1+\sum_{r'\mid r,1< r'<r}\frac{r'}{p^{r(1-1/r_1 )}}\right).
	\end{align*}
	Thus
	\begin{equation}\label{eq:sup_F}
	\sum_{r'\mid r, r'> 1} \frac{|F(r')|}{2r'} <\frac{p^r}{2r}\left(1+\frac{\sigma (r)-r-1}{p^{r(1-1/r_1)}}\right).
	\end{equation}
	
	Next we derive an upper bound on the value of $\sum_{r'\mid r, r'> 1} \frac{|K(r')|}{2 r'}$. 
	Following the notation and results given in Lemma \ref{le:Kr},  if $2\nmid r$ and $r>1$, then 
	\begin{equation}\label{eq:sup_K_1}
	\sum_{r'\mid r, r'> 1} \frac{|K(r')|}{2 r'}=0.
	\end{equation}
		If $r=2^{s_1}$ and $r>1$,  then 
	\begin{equation}
	\label{eq:sup_K_2}
	\sum_{r'\mid r, r'> 1} \frac{|K(r')|}{2 r'}=\sum_{i=1}^{s_1}\left(p^{2^{i-1}}-\frac{(-1)^{p+1}+1}{2}\right)\leq \sum_{i=1}^{s_1}p^{2^{i-1}}, 
	\end{equation}
	since $|K(r')|\leq p^{\frac{r'}{2}}$. If  $2 \mid r$ and $\ell>1$,   then
	\begin{align*}
	\sum_{r'\mid r, r'> 1} \frac{|K(r')|}{2 r'}=&\frac{1}{2 r}\sum_{r'\mid r, r'< r}r' \left|K\left(\frac{r}{r'}\right)\right|\\
	<&\frac{p^{\frac{r}{2}}}{2 r}\left(1+\sum_{r'\mid r,1< r'<r}\frac{r'}{p^{\frac{r}{2}-\frac{r}{2 r'}}}\right)\\	
	<&\frac{p^{\frac{r}{2}}}{2 r}\left(1+\sum_{r'\mid r,1< r'<r}\frac{r'}{p^{\frac{r}{4}}}\right),
	\end{align*}
	where the first inequality is obtained using that 
	\[ \sum_{r'\mid r, r'< r}r' \left|K\left(\frac{r}{r'}\right)\right| =  \left|K\left(r\right)\right|+ \sum_{r'\mid r, 1<r'< r}r' \left|K\left(\frac{r}{r'}\right)\right|< p^{\frac{r}2}+\sum_{r'\mid r, 1<r'< r}r' p^{\frac{r}{2r'}}.\]
	
	Hence, 
	\begin{equation}\label{eq:sup_K_3}
	\sum_{r'\mid r, r'> 1} \frac{|K(r')|}{2 r'}<\frac{p^{\frac{r}{2}}}{2 r}\left(1+\frac{\sigma (r)-r-1}{p^{\frac{r}{4}}}\right).
	\end{equation}
	
	Now, we are ready to prove the upper bound of $\Lambda(n,q)$.
	
	When  $n$ is odd, 	plugging \eqref{eq:sup_F}, \eqref{eq:sup_K_1}, \eqref{eq:sup_K_2} and \eqref{eq:sup_K_3} into 
	\[\frac{\Lambda(n,q)}{\varphi(n)/2}-\epsilon=\sum_{r'\mid r, r'> 1} \frac{|F(r')| + |K(r')|}{2r'},\]
	 we get the upper bound given in \eqref{eq:bound_Lambda_odd}.

	When $n$ is even, 
	\begin{align*}
	\frac{\Lambda(n,q)}{\varphi(n)/2}-\epsilon=&\sum_{r'\mid r, r'>1} \frac{|F(r')| + |K(r')|}{2r'}+\sum_{r'\mid 2r, r'\nmid r} \frac{|F(r')| - |K(r')|}{2r'}  \\
	<&\sum_{r'\mid 2r, r'>1} \frac{|F(r')| }{2r'}+\sum_{r'\mid r, r'>1} \frac{ |K(r')|}{2r'} .
	\end{align*}
	Plugging \eqref{eq:sup_F}, \eqref{eq:sup_K_1}, \eqref{eq:sup_K_2} and \eqref{eq:sup_K_3} into it, we get \eqref{eq:bound_Lambda_even_up}.
	
	Finally, we compute the lower bound. First, it is easy to see that 
	\[
	\sum_{r'\mid r, r'> 1}|F(r')| =|\F_{p^r}\setminus \F_p|=p^r-p.
	\]
	When $n$ is odd,
	\begin{align*}
	\frac{\Lambda(n,q)}{\varphi(n)/2}-\epsilon>&\sum_{r'\mid r, r'> 1} \frac{|F(r')| + |K(r')|}{2r}  \\
	>&\frac{p^r-p}{2r}  .
	\end{align*}
	Therefore, \eqref{eq:bound_Lambda_odd} is proved.
	
	When $n$ is even,
	\begin{align*}
	\frac{\Lambda(n,q)}{\varphi(n)/2}-\epsilon>&\sum_{r'\mid 2r, r'>1} \frac{|F(r')| }{2r'}-\sum_{r'\mid 2r, r'>1} \frac{|K(r')| }{2r'} \\
	>&\frac{p^{2r}-p}{4r} -\sum_{r'\mid 2r, r'>1} \frac{|K(r')| }{2r'} .
	\end{align*}	
	Then 
	\begin{equation*}
	\frac{\Lambda(n,q)}{\varphi(n)/2}-\epsilon>
	\begin{cases}
	\frac{p^{2r}-p}{4r}  -\sum_{i=1}^{s_1+1}p^{2^{i-1}} ,&r=2^{s_1};\\
	\frac{p^{2r}-p}{4r}  -\frac{p^r}{4r}\left(1+\frac{\sigma (2r)-2r-1}{p^{\frac{r}{2}}}\right) , &\text{otherwise}.
	\end{cases}
	\end{equation*}
	Consequently, we obtain \eqref{eq:bound_Lambda_even_low}.
\end{proof}

\section*{Acknowledgment}
The authors express their gratitude to the anonymous reviewers for constructive comments which are helpful to the improvement of the presentation of this paper.
Wei Tang and Yue Zhou were supported by the Sino-German Mobility Programme M-0157 and the Training Program for Excellent Young  Innovators of Changsha (No.\ kq2106006).
The research of Ferdinando Zullo was supported by the project ``VALERE: VAnviteLli pEr la RicErca" of the University of Campania ``Luigi Vanvitelli'' and was partially supported by the Italian National Group for Algebraic and Geometric Structures and their Applications (GNSAGA - INdAM).

\end{document}